\def\card{{{\operatorname{card}}}}
\numberwithin{equation}{section}
\theoremstyle{plain}
\newtheorem{theorem}[equation]{Theorem}
\newtheorem{corollary}[equation]{Corollary}
\newtheorem{lemma}[equation]{Lemma}
\newtheorem*{(DQ1)}{(DQ1)}
\theoremstyle{definition}
\theoremstyle{remark}
\begin{document}
\title [Families of directed graphs]{Families of directed graphs  and topological conjugacy of the associated Markov-Dyck shifts }
\author{Toshihiro Hamachi}
\author{Wolfgang Krieger}
\begin{abstract}
We describe structural properties of strongly connected finite directed graphs, that are invariants of the topological conjugacy of their Markov-Dyck shifts.  For strongly connected finite directed graphs with these properties topological conjugacy of their Markov-Dyck shifts implies isomorphism of the graphs. 
\end{abstract}
\maketitle

%section1
\section{Introduction}

Let $\Sigma$ be a finite alphabet, and let $S $ be the left shift on
$\Sigma^{\Bbb Z}$,
$$
S((x_i)_{ i \in {\Bbb Z}})   = (x_{i+1})_{ i \in {\Bbb Z}}, \qquad  
(x_i)_{ i \in {\Bbb Z}} \in \Sigma^{\Bbb Z}.
$$
The closed shift-invariant subsystems of the shifts $S$ are called
subshifts. For an introduction to the theory of subshifts see \cite{Ki} and \cite{LM}. 
A finite word in the symbols of 
$\Sigma$  is called admissible for the subshift 
$X \subset \Sigma^{\Bbb Z} $ if it
appears somewhere in a point of $X$. 
A subshift $X \subset \Sigma^{\Bbb Z} $ is uniquely determined by its language 
of admissible words. 

In this paper we continue the study \cite {KM} of  the topological conjugacy of Markov-Dyck shifts. The Markov-Dyck shift of a strongly connected finite directed graph is constructed via the graph inverse semigroup. We denote a finite directed graph $G$ with
vertex set ${\mathcal V}$ and edge set 
${\mathcal E}$ by $G(\mathcal V  , \mathcal E  )$. 
The source vertex of an edge $e \in {\mathcal E}$  
we denote by
$s$ and its target vertex by $t$.
Given a finite directed graphs $G = G(\mathcal V  , \mathcal E)$,
let 
$
{\mathcal E}^- = \{e^-: e \in   \mathcal E  \}
$
be a copy of ${\mathcal E}$.
Reverse the directions of the edges in ${\mathcal E}^-$
to obtain the 
edge set
$
{\mathcal E}^+ = \{e^+: e \in   \mathcal E  \}
$
of the reversed graph 
of $G(\mathcal V  , \mathcal E^-)  $.
In this way one has defined a directed graph 
${G}( \mathcal V  , {\mathcal E}^- \cup \thinspace \mathcal E^+  )$, that has the directed graphs
$G(\mathcal V, \mathcal E^-)$ and   $G(\mathcal V, \mathcal E^+)$ as subgraphs.
With idempotents $\bold 1_V, V \in {\mathcal V},$ 
the set 
${\mathcal E}^- \cup \{\bold 1_V:V \in {\mathcal V}\}\cup {\mathcal E}^+$
is the generating set of the graph inverse semigroup $\mathcal S(G)$ of $G$ (see \cite [Section 10.7]{L}), where, besides
$\bold 1_V^2 = \bold 1_V, V \in {\mathcal V},$ the relations are
$$
\bold 1_U\bold 1_W = 0, \qquad   U, W \in {\mathcal V}, U \neq W,  
$$
$$
f^-g^+ =
\begin{cases}
\bold 1_{s(f)}, &\text{if  $f = g$}, \\
0, &\text {if  $f \neq g$},\quad f,g \in {\mathcal E},
\end{cases}
$$
\begin{equation*}
\bold 1_{s(f)} f^- = f^- \bold 1_{t(f)},\qquad
\bold 1_{t(f)} f^+ = f^+ \bold 1_{s(f)},\qquad
f \in {\mathcal E}. 
\end{equation*}
The directed graphs with a single vertex and $N>1$
loops yield the Dyck inverse monoids (the "polycycliques" of \cite{NP}), that we denote by 
$\mathcal D_N$. 
 
We consider strongly connected finite directed graphs $G =G(\mathcal V, \mathcal E)$,
that we assume not to be a cycle.
From the graph $G$ one obtains a Markov-Dyck shift
$M\negthinspace {\scriptstyle D}({G})$, that has as alphabet the set 
${\mathcal E}^-\cup {\mathcal E^+}$,
and a word 
$(e_k)_{1 \leq k \leq K}$ in the symbols of ${\mathcal E}^-\cup {\mathcal E^+}$
is admissible for 
$M\negthinspace {\scriptstyle D}({G})$
precisely if 
$$
\prod_{1 \leq k \leq K}e_k \neq 0.
$$
The directed graphs with a single vertex and $N>1$
loops yield the Dyck shifts $D_N$ \cite {Kr1}.

For a directed graph $G( \mathcal V , \mathcal E )$ we  denote by $\mathcal R_G$ the set of vertices of $G$, that have at least two incoming edges, and
we denote  by $\mathcal F_G$ the set of edges that are the only incoming edges of their target  vertices. 
The graph  $G(\mathcal V,\mathcal F_G)$ is a directed subgraph of $G$, that decomposes into directed trees, that we refer to as the subtrees of $G$. 
The set  
$\mathcal R_G$
 is the set of  roots of the subtrees. A subtree is  equal to the one-vertex tree with vertex $V \in \mathcal R_G$, if $V$ is not the source vertex of an edge in $\mathcal F_G$.
Contracting the subtrees of $G$, that are not one-vertex, to their roots yields a directed graph, that we denote by $\widehat G$.  In \cite{Kr2} a Property (A) of subshifts, that is an invariant of topological conjugacy, was introduced and a semigroup, that is invariantly associated with  a subshift with Property (A), was constructed.
That the Markov-Dyck shifts have Property (A) was shown in \cite [Section 2] {HK}.
 The semigroup $\mathcal S(M\negthinspace {\scriptstyle D}({G}))$ that is associate to  
$M\negthinspace {\scriptstyle D}({G})$ 
is 
$\mathcal S(\widehat G)$ \cite [Section 3] {HK}, 
which implies, that the isomorphism class of the graph  $\widehat G$ is an invariant of topological conjugacy of the Markov-Dyck shift of $G$ \cite[Corollary 3.2]{Kr4},
\cite[Theorem 2.1]{Kr5}.

In \cite{KM}  
three families, $\bold{F}_I,\bold{F}_{II}$ and $\bold{F}_{III}$ of directed graphs 
$G =G( \mathcal V , \mathcal E )$, such that $G(\mathcal V, \mathcal F_G)$ is a tree,
were considered. The Markov-Dyck shifts of the graphs in each of these families  were characterized among the Markov-Dyck shifts by invariants of topological conjugacy. It was shown for the graphs in each of these families, that the topological conjugacy class of their Markov-Dyck shifts determines the isomorphism class of the graphs.
The family $\bold{F}_I$ contains the graphs $G =G( \mathcal V , \mathcal E )$, such that
$G(\mathcal V, \mathcal F_G)$ is a tree, and 
such that all of its vertices, except the root of the subtree, have out-degree one.
The family $\bold{F}_{II} $ contains the graphs $G =G( \mathcal V , \mathcal E )$, such that $G(\mathcal V, \mathcal F_G)$ is a tree
 such that  all leaves of the subtree 
 are at level one.
The family $\bold{F}_{III}$  contains the the graphs $G =G( \mathcal V , \mathcal E )$, such that 
$G(\mathcal V, \mathcal F_G)$ is a tree, that has the shape of a "V",
and that are such that the two leaves of the  subtree 
have the same out-degree in $G$, 
and all interior vertices of the  subtree 
have out-degree one in $G$.
 
This paper is a sequel of \cite{KM} that continues this line of investigation.
 We introduce two families  of  graphs 
$G(\mathcal V, \mathcal E)$, such that    
$G(\mathcal V, \mathcal F_G)$ is a tree, that we name $\bold{F}_{IV}$ and $\bold{F}_{V}$.
We characterize the Markov-Dyck shifts of the graphs in  these families among the Markov-Dyck shifts by invariants of topological conjugacy, and we show, that the topological conjugacy class of their Markov-Dyck shifts in these families determines the isomorphism class of the graphs.

In  Section 3 we consider  the family 
$\bold{F}_{IV}$, that we define as the family of graphs $G = G(\mathcal V, \mathcal E)$, such that     
$G(\mathcal V, \mathcal F_G)$ is a tree, such that
$
\card(\mathcal E\setminus \mathcal F_G) = 4,
$
and such that 
there is an $H \in \Bbb N$, such that  the leaves of the subtree  
are at level $H$,
and such that one finds two branch points, each of out-degree two, on the directed  cycles of minimal length, that pass through the root of the  subtree. 
It is also assumed, that the tree $G(\mathcal V, \mathcal F_G)$ is of small size. 

 In Section 4 we consider the family $\bold{F}_{V}$
of graphs $G = G(\mathcal V, \mathcal E)$ such that    
$G(\mathcal V, \mathcal F_G)$ is a tree,
that   
satisfy a strong structural condition: The allowed subtrees are line graphs, or they can be obtained by replacing in a caterpillar tree the "legs" of the caterpillar by line graphs. 

A rooted tree is called spherically homogeneous, if all of its leaves are at the same level, and if all  vertices at the same level have the same out-degree. We say that a graph
$G = G(\mathcal V, \mathcal E)$ such that    
$G(\mathcal V, \mathcal F_G)$ is a tree,
is spherically homogeneous if 
its subtree
is spherically homogeneous, if all source vertices of the edges $e \in \mathcal E \setminus \mathcal F_G$ are leaves of the subtree, and if all of the leaves
 of the subtree have the same out-degree in $G$ (see \cite{Kr5}).
In Section 5 we derive for graphs $G = G(\mathcal V, \mathcal E)$ such that    
$G(\mathcal V, \mathcal F_G)$ is a tree of height two,
a criterion for spherical homogeneity in terms of invariants of topological conjugacy of the Markov-Dyck shifts of the graphs. For  spherically homogeneous graphs 
$G = G(\mathcal V, \mathcal E)$ such that    
$G(\mathcal V, \mathcal F_G)$
has height two we show, that the topological conjugacy class of their Markov Dyck shifts determines the isomorphism class of the graphs.

In Setion 6 
we consider the graphs $G = G(\mathcal V, \mathcal E)$, such that the semigroup
$\mathcal S(M\negthinspace {\scriptstyle D}({G}))$ 
is the graph inverse semigroup of a two-vertex graph, and such that the graph 
$G(\mathcal V,\mathcal F_G)$ decomposes into a one-edge tree and a one-vertex tree. 
Also for these graphs
we show, that the topological conjugacy class of their Markov-Dyck shifts determines the isomorphism class of the graphs, providing at the same time a characterization of these graphs among the Markov-Dyck shifts by invariants of topological conjugacy.

We use the same method of proof is as in \cite{KM}: We choose canonical models for the graphs $G = G(\mathcal V, \mathcal E)$ and obtain sufficient information from certain topological conjugacy invariants of 
$M\negthinspace {\scriptstyle D}({G})$ to reconstruct the canonical model of $G$. In a preliminary Section 2 we introduce notation and recall the relevant invariants of topological conjugacy, that we use.   

%section2
\section{Preliminaries}

We introduce notation.
Given a graph $G = G(\mathcal V, \mathcal E)$ we denote by $\mathcal L(G)$ the set of finite paths in $G$, and by $\mathcal L_n(G)$ the set of finite paths in $G$ of length $n \in \Bbb N$. 
 We set
 $$
 \mathcal L_n(V) = \{b \in  \mathcal L_n(G): s(b) = V\}, \quad V \in \mathcal V.
 $$
 We associate to $V \in \mathcal V$
  graphs $G(\mathcal U_V(\eta)  ,  \mathcal B_V(\eta) ),$  
 by
$$
\mathcal U_V(\eta) = \bigcup_{\xi \leq \eta}\{ t(b): b \in   \mathcal L_\eta(G)     \}, \  \
\mathcal B_V(\eta) = \{ e \in \mathcal E: t(e) \in \mathcal U_V(\eta)    \}, \quad \eta \in \Bbb N,
$$
a code
$
\mathcal C_V = \bigcup_{I \in \Bbb N}\mathcal C_V(I)
$
by
\begin{multline*}
\mathcal C_V(I )= ( \bigcap_{1 \leq J  < I}\{  (w_i)_{1 \leq  i \leq 2I} \in \mathcal L(G(\mathcal E^- \cup \mathcal E^	+)):
  \prod_{1 \leq i \leq 2J } w_i \neq \bold 1_V\}) 
  \\
  \bigcap  \ \{  (w_i)_{1 \leq  i \leq 2I} \in \mathcal L(G(\mathcal E^- \cup \mathcal E^	+)): \prod_{1 \leq i \leq 2I } w_i= \bold 1_V\},  \quad I \in \Bbb N,
\end{multline*}
and subsets $\mathcal D_V(\eta)$ of $\mathcal C_V(\eta) $ by
$$
\mathcal D_V(\eta) =
 \{(\prod_{1 \leq k \leq \eta }d^-_k   )
 (\prod_{\eta \geq k \geq 1} d^+_k   ): 
(d_k)_{1 \leq k \leq \eta} \in  \mathcal L_\eta(G)  \},\quad \eta \in \Bbb N.
$$

  Given a graph $G = G(\mathcal V, \mathcal E)$
we denote the edge set of the graph $\widehat{G}$ by $\widehat{\mathcal E}(G)$, and we use $\mathcal R_G$  as its vertex set. Every edge $e \in \mathcal E \setminus \mathcal F_G$ maps into an edge $\widehat{e}$ of $\widehat{G}$, such that 
$
t(\widehat{e}) = t(e)$, 
 and,
 if $s(e) \in \mathcal R_G$,
such  that
$s(\widehat{e}) = s(e)$,  and,
if 
$s(e) \not\in \mathcal R_G$,  
such that $s(e)$ is the root of the subtree, to which $s(e)$  belongs. 
Note, that
$\widehat{\mathcal E} = \{\widehat{e}: e \in  \mathcal E \setminus \mathcal F_G\}.$

For $f \in \mathcal F_G$ we set $\widehat{f}$ equal to $\bold 1_R$, where 
$R\in \mathcal R_G$ is the root of the subtree, to which the edge $f$ belongs. 
We set
$$
\widehat{e^-} = \widehat{e}^- ,\quad     \widehat{e^+} = \widehat{e}^+ ,   \qquad     
e \in \mathcal E \setminus \mathcal F_G.
$$
A periodic point $p$ of period $\pi(p)$ of  $M\negthinspace {\scriptstyle D}({G})$ and its orbit are said to be neutral, if there exists an 
$R\in \mathcal R_G$, which then is uniquely determined by $p$, such that for some 
$i \in \Bbb Z$
$$
\prod_{i \leq j < i + \pi(p)} \widehat p_j  = \bold 1_R.
$$
We denote by $I^{0}_k(M\negthinspace {\scriptstyle D}({G}))$ the cardinality of the set of neutral periodic orbits of length $k$ of $M\negthinspace {\scriptstyle D}({G})$.

For  a non-neutral periodic point $p$ of period $\pi$ of  
$M\negthinspace {\scriptstyle D}({G})$ 
there exists  a simple cycle 
$(\widehat {a}_\ell)_{0 \leq \ell < L}$
in the graph $\widehat {\mathcal E}= G(\mathcal R_G  ,\widehat{\mathcal E} )$, such that for some $i \in \Bbb Z$ either
\begin{align*}
\prod_{i \leq j < i + \pi(p)} \widehat {p_j}  = (\prod_{0 \leq \ell < L}\widehat {a}_\ell^-)^M, 
 \tag {2.1}
\end{align*}
or
\begin{align*}
\prod_{i \leq j < i + \pi(p)} \widehat {p_j}  = (\prod_{L>\ell \geq 0}\widehat {a}_\ell^+)^M.\tag {2.2}
\end{align*}
The simple cycle $\widehat {a}$ is unique up to a cyclic permutation of its edges 
\cite[Section 2]{HI}\cite[Section 4]{HIK}. Following 
the terminology, that was introduced in
\cite{HI}, 
 we refer to the equivalence class of simple cycles in 
 the graph $\widehat {\mathcal E}= G(\mathcal R_G  ,\widehat{\mathcal E} )$, that is assigned in this way to the periodic point $p$, as the multiplier of $p$.
In the case (2.1)((2.2)) the periodic point $p$ is said to have a negative (positive) multiplier.
 The length of the multiplier is $L$. 
 A topological conjugacy carries neutral periodic points into neutral periodic points. Also the map, that assigns to a non-neutral periodic point its multiplier, is an invariant of topological conjugacy \cite[Section 4]{HIK}. 

Given a directed graph $G=G(\mathcal V, \mathcal E)$ we denote the set of multipliers of 
$M\negthinspace {\scriptstyle D}({G})$ by 
$\mathcal M(M\negthinspace {\scriptstyle D}({G}))   $, 
and the set multipliers of 
$M\negthinspace {\scriptstyle D}({G})$ of length $L$ of 
$M\negthinspace {\scriptstyle D}({G})$ by 
$\mathcal M_L(M\negthinspace {\scriptstyle D}({G}) )$.
We denote by $I^{0}_{2k}(M\negthinspace {\scriptstyle D}({G}))$ the cardinality of the set of periodic orbits of length $2k$ of $M\negthinspace {\scriptstyle D}({G})$ with negative multipliers.
We denote 
for a multiplier $\mu \in  \mathcal M(M\negthinspace {\scriptstyle D}({G}))$, by 
$
\mathcal O^{(\mu)}_{k}(M\negthinspace {\scriptstyle D}({G}))
$
the set of orbits with negative multiplier $\mu$  of length $k$, and we set
$$
I^{(\mu)}_k(M\negthinspace {\scriptstyle D}({G})) = \card (\mathcal O^{(\mu)}_{k}(M\negthinspace {\scriptstyle D}({G}))).
$$

In sections 3 - 5 we consider the case of a  graph 
$G=(\mathcal V, \mathcal E)$ 
such that $G(\mathcal V,\mathcal F_G)$ is a tree. In this case
$$
\mathcal M_1(M\negthinspace {\scriptstyle D}({G})) = \widehat{\mathcal E}(G).
$$
We set
$$
\nu (M\negthinspace {\scriptstyle D}({G})) = \card 
(\widehat{\mathcal E}(G)).
$$
Contracting the tree $G(\mathcal V,\mathcal F_G)$  to its root yields the directed graph with a single vertex and $\nu (M\negthinspace {\scriptstyle D}({G}))$ loops.
It follows from \cite[Section 5] {HIK},
that the graphs $G=(\mathcal V, \mathcal E)$, such that the graph 
$G(\mathcal V,\mathcal F_G)$ is a tree, are
precisely the directed graphs that have a Dyck inverse monoid associated to them,
and that 
$$
\mathcal S(M\negthinspace {\scriptstyle D}({G})) = 
\mathcal D_{\nu(M\negthinspace {\scriptscriptstyle D}({G}))}.
$$ 

We denote the root of the tree $G(\mathcal V,\mathcal F_G)$ by $V(0)$.
We denote the level of a vertex  $V \in \mathcal V$ in the tree 
$G(\mathcal V,\mathcal F_G)$ by  
$\lambda(V)$.
A vertex $V \in \mathcal V$  determines a path
$$
f(V) =(f_l(V))_{1 \leq l \leq  \lambda(V) },
$$
from $V(0)$ to $V$.
We set 
\begin{align*}
& f_l(l)  =  f_l(s(e))   , \ \ V_l^{{(e)}} = t(f_l(s(e))),  \quad 1 \leq l \leq \lambda (s(e)), 
\\
&V_0^{{(e)}}= V(0),  \
\mathcal V^{(e)} = \{  V_l^{(e)}: 0 \leq l \leq  \lambda (s(e)) \}, \qquad  \
e \in \mathcal E \setminus \mathcal F_G,.
\end{align*}
We set
\begin{align*}
&\Lambda^{(\widehat{e})}(M\negthinspace {\scriptstyle D}({G})) = \min \{ k \in \Bbb N: 
I^{(\widehat{e})}_k(M\negthinspace {\scriptstyle D}({G})) > 0 \}, \quad \widehat{e} \in \widehat {\mathcal E}(G).
 \end{align*} 
We will use of the notation  \
$\Lambda(M\negthinspace {\scriptstyle D}({G}))$ 
as a means to indicate, that all \
$ \Lambda^{(\widehat{e})}(M\negthinspace {\scriptstyle D}({G}))$,\ $
\widehat{e} \in \widehat {\mathcal E}(G),$
are equal, and that $\Lambda(M\negthinspace {\scriptstyle D}({G}))$  is their common value.
We will also use of the notation 
$I_k(M\negthinspace {\scriptstyle D}({G}))$ 
as a means to indicate, that all 
$ I_{2k}^{(\widehat{e})}(M\negthinspace {\scriptstyle D}({G})),
\widehat{e} \in \widehat {\mathcal E},$
are equal and  that $I_{2k}(M\negthinspace {\scriptstyle D}({G}))$, is their common value, 
$k \in \Bbb N.$

We also set
$$
\Delta^{(\widehat{e})} = I^{(\widehat{e})}_{\Lambda^{(\widehat{e})} + 2} -
 \Lambda^{(\widehat{e})}, \qquad \widehat{e} \in \widehat {\mathcal E}(G).
$$
We note that 
\begin{align*}
I^{0}_2(M\negthinspace {\scriptstyle D}({G})) = \card (\mathcal E) =\nu(M\negthinspace {\scriptstyle D}({G})) + \card (\mathcal F_G). \tag {A}
\end{align*}

The orbit
$$
o^{(\widehat {e}) }= (f^-(s(e)), {e}^-), 
$$
is the only orbit in $ \mathcal O^{(\widehat {e}) } _{\Lambda (\widehat {e})}(M\negthinspace {\scriptstyle D}({G})), \widehat{e} \in \widehat {\mathcal E}(G)$.
By inserting words $w_V \in \mathcal C^{*}_{V}, V \in \mathcal V^{(e)} $, such that
\begin{align*}
\sum_{V \in \mathcal V^{(e)}} \ell(w_V) = 2\eta, \tag{2.3}
\end{align*}
into the orbit $o^{(\widehat {e}) }$ one obtains an orbit
$$
( (w_{V_{l - 1}} f_l^-(s(e)))_{1 \leq  l \leq \lambda(e)}, w_{V_{s(e)}} , e )    \in \mathcal O^{(\widehat {e}) } _{\Lambda (\widehat {e})+ 2\eta}, \qquad \eta \in \Bbb N.
$$
Conversely, every orbit 
$
o \in  
\mathcal O^{(\widehat {e}) } _{\Lambda (\widehat {e})+ 2\eta}, \eta \in \Bbb N,
$
can be obtained from a unique set of words 
$w_V \in \mathcal C^{*}_{V}, V \in \mathcal V^{(e)} $, 
such that (2.3) holds:
 The word $w_V$ is identified as the longest word  $w$ in 
$\mathcal C_V^{*}$, such that $o$ contains a word in 
$(\mathcal E^-\setminus \mathcal F_G) w 
\mathcal (\mathcal E^-\setminus \mathcal F_G)$. 

%section3

\section{The Family $\bold{F}_{IV}$.}

Set
$$
\Pi = \{ (H, h, h_0, h_1) \in  \Bbb N: h < H,  h_0 \leq h_1 \leq H - h\}.
$$
Given data
$$
 (H, h, h_0, h_1) \in \Pi
$$
 we build a directed   $G(H, h, h_0, h_1)$. We set
 \begin{multline*}
\mathcal V (H, h, h_0, h_1) = 
 \{V(i): 0 \leq i \leq h\} \  \cup
\left
( \bigcup_{\alpha \in \{ 0,1\}}\{V_\alpha(i): 0 \leq i \leq h_\alpha\}  
\right
) \cup 
\\
\left
( \bigcup_{\alpha,\beta \in \{ 0,1\}}\{V_{\alpha,\beta}(i): 
0 \leq i 
\leq H - h - h_\alpha\}  
\right),
\end{multline*}
\begin{multline*}
\mathcal F (H, h, h_0, h_1) = \{f(i): 0 \leq i \leq h   \}  \cup 
\left
( \bigcup_{\alpha \in \{ 0,1\}}\{f_\alpha(i): 0 \leq i \leq h_\alpha\}  
\right
) \cup
 \\
\left
( \bigcup_{\alpha,\beta \in \{ 0,1\}}\{f_{\alpha,\beta}(i):
 0 \leq i 
\leq H - h - h_\alpha\}  
\right),
\end{multline*}
\begin{align*}
 \mathcal E  = 
 \{ e_{\alpha,\beta}:\alpha,\beta \in \{ 0,1\} \} ,
 \end{align*}
and we define the graph $G(H, h, h_0, h_1)$ as the directed graph with vertex set 
$\mathcal V (H, h, h_0, h_1)$
and edge set  $\mathcal F (H, h, h_0, h_1) \cup\mathcal E  = 
 \{ e_{\alpha,\beta}:\alpha,\beta \in \{ 0,1\} \}$, 
and source and target mappings given by
\medskip
$$
s(f(i)) = V(i - 1), t(f(i)) = V(i),   \quad 0 < i \leq h,
$$
\medskip
\begin{align*}
&s(f_\alpha(1)) = V(h),
\\
&s(f_\alpha(i)) = V_\alpha(i - 1),  \quad 1 < i \leq h_\alpha,
\\
&t(f_\alpha(i)) = V_\alpha(i), 
  \quad 0 < i \leq h_\alpha, \quad \alpha \in \{ 0,1\},
\end{align*}
\medskip
\begin{align*}
&s(f_{\alpha,\beta}(1)) = V_\alpha(h),
\\
&s(f_{\alpha,\beta}(i)) = V_\alpha(i - 1),  \quad 
1 < i \leq H- h - h_\alpha,
\\
&t(f_{\alpha,\beta}(i)) = V_{\alpha,\beta}(i), 
  \quad 0 < i \leq H - h - h_\alpha, \quad \alpha, \beta \in \{ 0,1\},
\end{align*}
\medskip
\begin{align*}
&s(e_{\alpha,\beta}) =
\begin{cases}
V_{\alpha}(h_\alpha), 
&\text{if $H - h - h_\alpha = 0$}, \\
V_{\alpha,\beta}(h_{\alpha,\beta}), &\text{if  $H - h - h_\alpha > 0$},
\end{cases} 
\\
&t(e_{\alpha,\beta}) = V(0), \qquad \quad  \ \alpha, \beta \in \{ 0,1\}.
\end{align*}
The graph $G(H, h, h_0, h_1)$ has the single subtree 
$G(\mathcal V (H, h, h_0, h_1), \mathcal F (H, h, h_0, h_1) )$.

We set
\begin{multline*}
\Pi_{IV} =
\{ (H ,h,  h_0, h_1  ) \in \Bbb N^4:
\\
h < H,  h_0 \leq h_1\leq H-h-  h_0, 3h +h_0 +h_1\geq 
2H +  \left \lceil \tfrac{H}{2} \right \rceil 
\end{multline*}
and define the family $\bold F_{IV}$ by
$$
\bold F_{IV} = \{ G(H , h, h_0, h_1  )  :(H ,h, h_0, h_1  ) \in \Pi_{IV}  \}.
$$

For $(H ,h, h_0, h_1  ) \in \Pi_{IV} $ one has 
\begin{align*}
h > \frac{H}{2}.  \tag {B}
\end{align*}

\begin{theorem}
For a finite directed graph $G=G(\mathcal V, \mathcal E)$ there exist data
$$
(h,h_0,h_1,H)   
\in \Pi_{IV},
$$
such that
there is a topological conjugacy 
\begin{align*}
M\negthinspace {\scriptstyle D}({{G}}) \simeq 
M\negthinspace {\scriptstyle D}(G(H,h,h_0,h_1)), 
\end{align*}
if and only if  
\begin{align*}
\mathcal S(M\negthinspace {\scriptstyle D}({{G}})) = \mathcal D_4,\tag {C1}
\end{align*}
\begin{align*}
\Lambda ^{(e)}  = H + 1, \
\Delta^{(e)} = 2,    \qquad  e \in
\mathcal M(M\negthinspace {\scriptstyle D}({{G}})),
\tag {C2}
\end{align*}
and
\begin{align*}
\tau(M\negthinspace {\scriptstyle D}({{G}}))  \leq \Lambda (M\negthinspace {\scriptstyle D}({{G}}))  - 1+
 \left\lceil \tfrac{\Lambda(M\negthinspace {\scriptstyle D}({{G}}))  - 1}{2}\right\rceil  .
\tag {C3}
\end{align*}
\end{theorem}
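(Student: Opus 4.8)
The plan is to establish the two implications by the canonical-model method of \cite{KM}: for the necessity one computes the invariants of $M\negthinspace{\scriptstyle D}(G(H,h,h_0,h_1))$ directly, and for the sufficiency one reconstructs the graph from the invariants of $M\negthinspace{\scriptstyle D}(G)$. For the necessity, fix $(H,h,h_0,h_1)\in\Pi_{IV}$. The four edges $e_{\alpha,\beta}$ are the only edges of $G(H,h,h_0,h_1)$ whose target carries more than one incoming edge, so $\mathcal R_G=\{V(0)\}$, the graph has the single subtree $G(\mathcal V,\mathcal F_G)$, and $\widehat G$ is the one-vertex graph with the four loops $\widehat e_{\alpha,\beta}$; thus $\nu(M\negthinspace{\scriptstyle D}(G))=4$ and $\mathcal S(M\negthinspace{\scriptstyle D}(G))=\mathcal D_4$, which is (C1). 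Since $\widehat G$ has a single vertex, every simple cycle in $\widehat{\mathcal E}(G)$ has length one, so $\mathcal M(M\negthinspace{\scriptstyle D}(G))=\widehat{\mathcal E}(G)$ consists of these four loops and (C2) concerns them. For each loop the minimal orbit $o^{(\widehat e)}=(f^-(s(e)),e^-)$ has period $\lambda(s(e))+1$; as every $s(e_{\alpha,\beta})$ is a leaf at level $H$ this gives $\Lambda^{(\widehat e)}=H+1$. By the insertion description of Section 2 the orbits counted by $I^{(\widehat e)}_{\Lambda+2}$ correspond to the codewords of $\bigcup_{V\in\mathcal V^{(e)}}\mathcal C_V(1)$, and $\card\mathcal C_V(1)$ equals the out-degree of $V$ in $G$; summing over the $H+1$ vertices of $\mathcal V^{(e)}$ and using that this path meets exactly the two out-degree-two branch points $V(h)$ and $V_\alpha(h_\alpha)$ gives $I^{(\widehat e)}_{\Lambda+2}=(H+1)+2$, hence $\Delta^{(\widehat e)}=2$. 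Finally one evaluates $\tau(M\negthinspace{\scriptstyle D}(G))$, expresses it through the branch-point levels $h$, $h+h_0$, $h+h_1$, and checks that (C3) is equivalent to the inequality $3h+h_0+h_1\geq 2H+\lceil\tfrac H2\rceil$ defining $\Pi_{IV}$.

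For the sufficiency I would recover the graph from the invariants. By (C1) one has $\nu=4$, so $G(\mathcal V,\mathcal F_G)$ is a tree with a single root $R$ and $G$ has exactly four non-tree edges, each directed to $R$. The equality $\Lambda^{(\widehat e)}=H+1$ forces every non-tree-edge source to lie at level $H$, and strong connectivity then makes every leaf a non-tree-edge source, so all leaves lie at level $H$. Counting edges and vertices gives the total excess $\sum_V(\operatorname{outdeg}(V)-1)=\card\mathcal E-\card\mathcal V=3$, while the insertion description turns $\Delta^{(\widehat e)}=2$ into $\sum_{V\in\mathcal V^{(e)}}(\operatorname{outdeg}(V)-1)=2$ along every root-to-source path. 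A short case analysis on how an excess of $3$ can be distributed excludes a vertex of out-degree four, which would make each path through it carry excess $3$, and a vertex of out-degree three, which would leave some path with excess at most $1$; hence the excess sits at three vertices of out-degree two, and the demand that every path meet exactly two of them excludes the caterpillar and forces the balanced configuration with a common-ancestor branch point and two further branch points below two of its children. At this stage the branch points may still coincide with multiply-sourced leaves, yielding degenerate graphs with only two or three distinct leaves; these satisfy (C1) and (C2) as well, and it is (C3) that removes them, since the value of $\tau$ on them exceeds $\Lambda-1+\lceil\tfrac{\Lambda-1}2\rceil$ while the genuine four-leaf shape obeys (C3). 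This isolates the tree of $G(H,h,h_0,h_1)$; the levels $h$ and $h_0\leq h_1$ are then read off from the thresholds at which the growth of $I^{(\widehat e)}_{\Lambda+2\eta}$ changes as longer codewords descend below the branch points, and (C3) yields $3h+h_0+h_1\geq 2H+\lceil\tfrac H2\rceil$, placing $(H,h,h_0,h_1)$ in $\Pi_{IV}$ and giving $G\cong G(H,h,h_0,h_1)$.

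The main obstacle should be the analysis of $\tau$ and of (C3). The excess count and the exclusion of the caterpillar and of vertices of out-degree greater than two are routine once the insertion description is available, but two points require real work: separating the genuine four-leaf graphs from the degenerate graphs whose branch points coincide with their leaves, and reading the exact branch-point levels $h,h_0,h_1$ off the orbit-counting functions $I^{(\widehat e)}_{\Lambda+2\eta}$. Both rest on a quantitative study of $\tau$, and the purpose of the small-size hypothesis is precisely to prevent the several orbit-counting functions attached to the different multipliers from coinciding, so that the levels are determined without ambiguity.
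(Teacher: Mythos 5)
Your proposal follows the same strategy the paper intends --- translate (C1)--(C3) into structural properties of the subtree of the canonical model and back --- but the paper's own proof consists of exactly two sentences asserting that this translation can be done, whereas you actually carry it out: the computation of $\nu$, $\Lambda^{(\widehat e)}$ and $\Delta^{(\widehat e)}$ for $G(H,h,h_0,h_1)$, and in the converse direction the excess count $\sum_V(\operatorname{outdeg}(V)-1)=\card(\mathcal E)-\card(\mathcal V)=3$ together with the per-path constraint $=2$ forcing three out-degree-two branch points in the non-caterpillar configuration. That part is sound and is a genuine improvement on what the paper records. The one step you assert without verification is that (C3) eliminates the degenerate configurations in which a lower branch point is itself a leaf carrying two of the four returning edges, and this is exactly the step that does not obviously work: the set $\Pi_{IV}$ is cut out by \emph{two} conditions, the inequality $3h+h_0+h_1\geq 2H+\left\lceil\tfrac H2\right\rceil$ (which is what the paper says (C3) translates, via $\tau=\card(\mathcal F_G)=4H-3h-h_0-h_1$) and the separate constraint $h_1\leq H-h-h_0$, which is what actually forbids $s(e_{\alpha,\beta})=V_\alpha(h_\alpha)$. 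A graph such as $G(H,h,h_0,h_1)$ with $h$ close to $H$ and $h_0=h_1=H-h$ satisfies (C1), (C2) and the numerical inequality of (C3) while having only two leaves, so it cannot be conjugate to any member of $\bold F_{IV}$ (its invariant $\tau$ matches no admissible parameter choice). So your claim that ``the value of $\tau$ on them exceeds $\Lambda-1+\lceil\tfrac{\Lambda-1}2\rceil$'' fails for these graphs. To be fair, this gap is inherited from the theorem as stated --- the paper's two-sentence proof does not address it either --- but if you want a complete argument you must either verify that every degenerate configuration violates (C3), which the example above suggests is false, or point out that an additional invariant condition encoding $h_1\leq H-h-h_0$ is needed.
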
 

\begin{proof} 
The invariant conditions (C 1 - 2) are the translation of a description of the graphs 
$G(H,h,h_0,h_1), (H,h,h_0,h_1) \in \Pi$, that is in terms of the subtree, of its height, its number of its leaves and its branch points. The invariant condition (C 3) is the translation of the defining condition of 
$\Pi_{IV}$.
\end{proof}

\begin{theorem}
For directed graphs $G=G(\mathcal V  , \mathcal E )$, such that 
$\mathcal S(M\negthinspace {\scriptstyle D}({{G}}))$
 is  a Dyck inverse monoid,
 and such that (D 1 - 2) hold,
the topological  conjugacy of the Markov-Dyck shifts 
$M\negthinspace {\scriptstyle D}({{G}})$ implies the isomorphism of the graphs 
$G$.
\end{theorem}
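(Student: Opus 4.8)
The plan is to reduce the admissible graphs to a normal form and then to recover the parameters of that normal form from topological conjugacy invariants. First I would use the standing hypotheses — that $\mathcal S(M\negthinspace {\scriptstyle D}({G}))$ is a Dyck inverse monoid, which by the discussion following the identity (A) means precisely that $G(\mathcal V,\mathcal F_G)$ is a single tree, together with the structural conditions (D1) and (D2) — to show that the subtree of $G$ must have the canonical shape: all leaves at one common level $H$, with the prescribed branch points on the minimal cycles through the root. This identifies every such $G$ with a unique canonical graph $G(H,h,h_0,h_1)$, where the convention $h_0\le h_1$ removes the only nontrivial labelling ambiguity (the interchange of the two middle branches; the interchange of the two equal-length lower branches is a genuine automorphism). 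The theorem is thereby reduced to showing that the data $(H,h,h_0,h_1)$ with $h_0\le h_1$ is determined by the topological conjugacy class of $M\negthinspace {\scriptstyle D}(G(H,h,h_0,h_1))$.

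Next I would reconstruct the parameters in order of increasing refinement, using the orbit counts $I^{(\widehat{e})}_{\Lambda+2\eta}$. The height $H$ is read off at once from the common value $\Lambda=H+1$. For the remaining parameters the key tool is the bijection of Section 2 between orbits in $\mathcal O^{(\widehat{e})}_{\Lambda+2\eta}$ and assignments of first-return words $w_V\in\mathcal C_V^{*}$, $V\in\mathcal V^{(e)}$, subject to $\sum_V\ell(w_V)=2\eta$. This makes the generating function $\sum_\eta I^{(\widehat{e})}_{\Lambda+2\eta}z^\eta$ factor as $\prod_{V\in\mathcal V^{(e)}}C_V(z)$, where $C_V=(1-\gamma_V)^{-1}$ and the first-return series satisfy a recursion governed by the edges out of $V$, of the form $\gamma_V(z)=\sum_{e:\,s(e)=V} z\,(1-\gamma_{t(e)}(z))^{-1}$, with the leaf boundary values feeding back to the root $V(0)$ through the four return edges. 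The lowest coefficient already confirms the picture: $[z^1]\prod_{V\in\mathcal V^{(e)}}C_V(z)$ equals the sum of the out-degrees over $\mathcal V^{(e)}$, namely $(H+1)+2$, so that $\Delta^{(\widehat{e})}=2$ simply counts the two branch points lying on the path. The higher coefficients of $\gamma_V$ see the out-degrees of successively deeper descendants of $V$, so the levels $h$ and $h+h_\alpha$ of the two branch points on the path to $e_{\alpha,\beta}$ are progressively exposed as $\eta$ grows. The two multiplier types $\{e_{0,0},e_{0,1}\}$ and $\{e_{1,0},e_{1,1}\}$ have identical count sequences within each type and differ between types exactly through $h_0$ versus $h_1$; matching these sequences recovers $h$ and the multiset $\{h_0,h_1\}$, and $h_0\le h_1$ then fixes the labels.

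I would then close the argument formally. If $M\negthinspace {\scriptstyle D}(G)\simeq M\negthinspace {\scriptstyle D}(G')$ for two graphs satisfying the hypotheses, then, since the multiplier assignment and all of the counts $I^{(\widehat{e})}_k$ are invariants of topological conjugacy (Section 2), the reconstruction above returns the same data $(H,h,h_0,h_1)$ for both, whence their normal forms coincide and $G\cong G'$.

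The main obstacle is the explicit analysis and inversion of the factored series $\prod_{V\in\mathcal V^{(e)}}C_V(z)$. Because all four return edges loop back to the single root, the $\gamma_V$ are coupled and $\gamma_{V(0)}$ is an algebraic function determined by the whole tree rather than a clean top-down product; one must therefore verify that, within the range of $\eta$ that is actually usable, the contribution of a branch point at a given level separates cleanly from the contributions of the non-branch vertices and, crucially, from that of the second branch point. I expect the case of small $h_0$ — the two branch points close together — to demand the most care, and the defining inequality $3h+h_0+h_1\ge 2H+\lceil H/2\rceil$ together with $h>H/2$ (equivalently the bound on $\tau$ appearing in (C3)) to be precisely what guarantees that the reconstruction window is long enough to resolve the two branch-point positions unambiguously.
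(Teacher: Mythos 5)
Your overall skeleton agrees with the paper's: reduce to the canonical models $G(H,h,h_0,h_1)$, read $H$ off from $\Lambda = H+1$, and recover the remaining parameters from the counts $I^{(\widehat e)}_{\Lambda+2\eta}$ via the Section~2 bijection between orbits and tuples of inserted words $w_V\in\mathcal C_V^{*}$. But the step that carries the entire weight of the theorem --- showing that these counts actually determine $h_0$, $h_1$ and $h$ --- is exactly the step you defer (``one must therefore verify that \dots the contribution of a branch point at a given level separates cleanly''), so as it stands the argument is a plan rather than a proof. The paper resolves this not by inverting the coupled algebraic system of first-return series, but by a comparison device: it introduces the explicit reference graph $\widetilde G = G(H,\lceil H/2\rceil,\lfloor H/2\rfloor,\lfloor H/2\rfloor)$, builds graph isomorphisms between the $(2h_0-1)$-neighbourhoods of corresponding path vertices of $G$ and $\widetilde G$, concludes that $I^{(e_{\alpha,\beta})}_{H+1+2\eta}=I^{(\widetilde e_{\widetilde\alpha,\widetilde\beta})}_{H+1+2\eta}$ for $1\le\eta\le h_0$, and identifies $h_0$ as the first $\eta$ at which the two counts diverge; the size of the first discrepancy ($4$ versus $2$, computed by a path count of the sets $\mathcal D_V(h_0+1)$) decides whether $h_1=h_0$. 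A second reference graph $\widetilde{\widetilde G}$, whose parameter $\widetilde{\widetilde h}_1$ is the largest value compatible with the invariants already found, then pins down $h_1$ in the same way. This comparison argument is the content you would still have to supply.

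A concrete point where your plan, as stated, would fail: you propose to read the level $h$ of the first branch point off the coefficient sequence (``the levels $h$ and $h+h_\alpha$ \dots are progressively exposed as $\eta$ grows''). The low-order coefficients of $\prod_{V\in\mathcal V^{(e)}}C_V$ are symmetric functions of the local out-degree data along the path and do not locate a branch point's position on it; that position only becomes visible at insertion depths comparable to $h$ or $H-h$, which lies outside the window that (C3) (equivalently $3h+h_0+h_1\ge 2H+\lceil H/2\rceil$ and $h>H/2$) protects from the coupling through the root. The paper does not attempt this: it obtains $h$ at the very end from the elementary identity (A), $I^{0}_2 = \nu + \card(\mathcal F_G)$, namely $h = \tfrac13(4\Lambda+\nu-h_0-h_1-I^{0}_2-4)$, once $h_0$ and $h_1$ are known. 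Relatedly, your assertion that the two multiplier types $\{e_{0,\beta}\}$ and $\{e_{1,\beta}\}$ ``differ between types exactly through $h_0$ versus $h_1$'' is not how the paper's count works: the discrepancy against $\widetilde G$ at $\eta=h_0+1$ is the same for all four multipliers, and it is its magnitude, not a difference between the two types, that detects whether $h_1>h_0$.
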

\begin{proof}
Let 
$$  
(H,h,h_0,h_1) \in \Pi_{IV}.
$$
We introduce notation for subsets of the vertex set of a graph 
$$
G = G(H,h,h_0,h_1)
$$
 and of the graph 
 $$ \widetilde G = G(H, \left\lceil \tfrac{H}{2}\right\rceil  , \left \lfloor \tfrac{H}{2} \right \rfloor ,  \left \lfloor \tfrac{H}{2} \right \rfloor ),
$$
and for mappings between these subsets. (We put a tilde into the notations that pertain to 
the graph $\widetilde G$.)
 
We set
$$
\mathcal U^{\blacktriangle}(G) = \{V(h - i): 0 \leq i < h_0 \}, \quad
\widetilde{\mathcal U}^\blacktriangle(\widetilde {G}) = 
\{ \widetilde {
V} (\left\lceil \tfrac{H}{2}\right\rceil  - i ) : 0 \leq i < h_0       \},
$$
and we define a bijection
$$
\Phi_G:  \mathcal U^\blacktriangle(G)  \to \widetilde{\mathcal U}^\blacktriangle(\widetilde {G})
$$
by
$$ 
\Phi_G(V(h - i)) = 
 \widetilde {V} 
(\left\lceil \tfrac{H}{2}\right\rceil   - i ),
 \quad 
0 \leq i < h_0.
$$

We set
$$
\mathcal U^{\blacktriangledown}_\alpha = \{ V_\alpha(i): 0 < i \leq h_0\},  \ \
\widetilde {\mathcal U}^{\blacktriangledown}_\alpha = \{ \widetilde {V}_\alpha(i): 0 < i \leq h_0\}, \qquad \alpha \in \{ 0, 1 \}.
$$
One has, that
$$
\mathcal U_{V(h - h_0 +1   )}(2h_0 - 1) = \mathcal U(G) \cup 
\mathcal U^{\blacktriangledown}_{0}(G) \cup\mathcal U^{\blacktriangledown}_{1}(G),
$$
$$
\mathcal U_{\widetilde {V}(h - h_0 +1   )}(2h_0 - 1) = \widetilde {\mathcal U}(G) \cup 
\widetilde {\mathcal U}^{\blacktriangledown}_{0}(G) \cup\widetilde {\mathcal U}^{\blacktriangledown}_{1}(G). 
$$
We extend the bijection
$$
\Phi_G:  \mathcal U^\blacktriangle(G)  \to \widetilde{\mathcal U}^\blacktriangle(\widetilde {G})
$$
to a bijection
$$
\widehat{\Phi}_G: \mathcal U_{V(h - h_0 +1   )}(2h_0 - 1) \to 
\mathcal U_{\widetilde {V}(h - h_0 +1   )}(2h_0 - 1)
$$
by
$$
\widehat{\Phi}_G(V_\alpha(i)) = \widetilde{V}_\alpha(i), \qquad 0 < i \leq h_0, \ \alpha \in \{0,1  \}.
$$

We set
$$
\mathcal U^{\blacktriangle}_\alpha(G) = \{V_\alpha(h_\alpha - i): 0 \leq i < h_0 \}, \quad
\widetilde{\mathcal U}^{\blacktriangle}_\alpha(\widetilde {G}) = 
\{ \widetilde {
V}_\alpha ( \left \lfloor \tfrac{H}{2} \right \rfloor  - i ) : 0  \leq h_0   < h_0   \},
$$
and we define a bijection
$$
\Phi^{(\alpha)}_G:  \mathcal U^{\blacktriangle}_\alpha(G)  \to \widetilde{\mathcal U}^{\blacktriangle}_\alpha(\widetilde {G})
$$
by
$$ 
\Phi^{(\alpha)}_G(V_\alpha(h_\alpha - i)) = 
 \widetilde {V}_\alpha 
( \left \lfloor \tfrac{H}{2} \right \rfloor  - i ), \
0 \leq i < h_0, \qquad \alpha\in \{0,1 \}.
$$

We set
$$
\mathcal U^{\blacktriangledown}_{\alpha}(G) = \{V_\alpha(i): 0 < i \leq h_0 \},
 \qquad \alpha\in \{0,1 \}.
$$
(Note that $ \mathcal U^{\blacktriangledown}_{0}(G) = \mathcal U^{\blacktriangle}_{0}(G)$. If $h_1 = h_0$, then also $ \mathcal U^{\blacktriangledown}_{1}(G) = \mathcal U^{\blacktriangle}_{1}(G)$.) 
We set
$$
\widetilde{\mathcal U}^{\blacktriangledown}(\widetilde{G})  
= \{\widetilde{V}(i): 0 \leq i <h_0\}.
$$
One has, that
$$
{\mathcal U}
_{\widetilde{V}_\alpha( {\left \lfloor \tfrac{\scriptscriptstyle H}{\scriptscriptstyle 2} \right \rfloor } - h_0 +1   )}(2h_0 - 1) =
\widetilde{\mathcal U}^{\blacktriangle}_\alpha(\widetilde{G}) \cup\widetilde{\mathcal U}^{\blacktriangledown}(\widetilde{G}), \qquad \alpha \in \{0,1  \}.
$$

By (B) it is possible to set
$$
\mathcal U^{\blacktriangledown} (G) = \{V(i): 0 \leq i < h_0\}.
$$

To determine  sets $\mathcal U_{\alpha, \beta},  \beta \in \{0, 1 \}$, such that
$$
\mathcal U_{V_\alpha(h_\alpha - h_0 +1)}(2h_0 - 1) =
 \mathcal U^{\blacktriangle}_\alpha(G)
 \cup  \mathcal U_{\alpha, 0 } (G) \cup\mathcal U_{\alpha, 1 } (G), 
$$
and to extend  
the bijections 
$$
\Phi^{(\alpha)}_G:  \mathcal U^{\blacktriangle}_\alpha(G)  \to \widetilde{\mathcal U}^{\blacktriangle}_\alpha(\widetilde {G})
$$
to  surjections
$$
\widehat{\Phi}_G^{(\alpha)}:\mathcal U_{V_\alpha(1)}(2h_0 - 1)  \to {\mathcal U}
_{\widetilde{V}_\alpha( {\left \lfloor \tfrac{\scriptscriptstyle H}{\scriptscriptstyle 2} \right \rfloor } - h_0 +1   )}(2h_0 - 1), \quad\alpha \in \{0, 1 \},
$$
we distinguish cases:

In the case, that $h_\alpha = H - h$, we set
\begin{align*}
&\mathcal U_{\alpha, \beta} = \mathcal U^{\blacktriangledown} (G), \quad\beta \in \{0, 1 \},
\\
&\widehat{\Phi}_G^{(\alpha)} (V(i)) = \widetilde{V}(i) , \qquad 0 \leq i < h_0.
\end{align*}

In the case, that $H - h - h_0 < h_\alpha < H - h$, we set
\begin{multline*}
\mathcal U_{\alpha, \beta} = \{V_{\alpha, \beta}(i): 0 < i \leq H - h - h_\alpha   \} \cup
\{V(i): 0 \leq i < h_0 + h +h_\alpha -H   \}, 
\\
\beta \in \{0, 1 \},
\end{multline*}
\begin{align*}
&\widehat{\Phi}_G^{(\alpha)}( V_{\alpha, \beta}(i)  ) =  \widetilde{V}(i), \qquad 0 < i \leq H - h - h_\alpha,
\\
&\widehat{\Phi}_G^{(\alpha)}( V(i)  ) =  \widetilde{V}(i + H - h - h_\alpha), \qquad 
0\leq i < h_0 + h + h_\alpha - H.
\end{align*}

In the case, that $h_\alpha \leq H - h - h_0$, we set
\begin{align*}
&\mathcal U_{\alpha, \beta} = \{V_{\alpha, \beta}(i): 0 < i \leq  h_0   \}, \quad \beta \in \{0, 1 \},
\\
&\widehat{\Phi}_G^{(\alpha)}(V_{\alpha, \beta}(i) ) = \widetilde{V}(i - 1), \qquad 0 < i \leq h_0.
\end{align*}

Supplementing the bijection
 $$
 \widehat {\Phi}_G: \mathcal U_{V(h - h_0 +1   )}(2h_0 - 1) \to
{\mathcal U}
_{\widetilde{V}( {\left \lfloor \tfrac{\scriptscriptstyle H}{\scriptscriptstyle 2} \right \rfloor } - h_0 +1   )}(2h_0 - 1) 
$$
with the bijection 
$$
\Psi_G: \mathcal B_{V(h - h_0 +1   )}(2h_0 - 1) \to
{\mathcal B}
_{\widetilde{V}( {\left \lfloor \tfrac{\scriptscriptstyle H}{\scriptscriptstyle 2} \right \rfloor } - h_0 +1   )}(2h_0 - 1), 
$$  
that is given by
$$
t(\Psi_G{(b)}) = \Phi_G(t(b)). \qquad  b \in \mathcal B_{V(h - h_0 +1   )}(2h_0 - 1),
$$
yields an isomorphism between the graph 
$$
G( \mathcal U_{V(h - h_0 +1   )}(2h_0 - 1)  ,   \mathcal B_{V(h - h_0 +1   )}(2h_0 - 1)    )
$$
and the graph
$$
G({\mathcal U}
_{\widetilde{V}( {\left \lfloor \tfrac{\scriptscriptstyle H}{\scriptscriptstyle 2} \right \rfloor } - h_0 +1   )}(2h_0 - 1)  ,   {\mathcal B}
_{\widetilde{V}( {\left \lfloor \tfrac{\scriptscriptstyle H}{\scriptscriptstyle 2} \right \rfloor } - h_0 +1   )}(2h_0 - 1)    ).
$$
Restricting this isomorphism yields isomorphisms between the graph
$$
G(   \mathcal U_{V(h - \eta + i  )} (\eta) ,  \mathcal B_{V(h - \eta + i  )} (\eta)    )
$$
and the graph
$$
 {G}(     {\mathcal U}_{ \widetilde{V}(\left \lceil \tfrac{\scriptscriptstyle H}{\scriptscriptstyle 2} \right \rceil  - \eta + i  )} (\eta)      ,  {\mathcal B}_{ \widetilde{ V}(\left \lceil \tfrac{\scriptscriptstyle H}{\scriptscriptstyle 2} \right \rceil  - \eta + i  )} (\eta)   ) ,   \qquad  0 < i \leq  \eta \leq h_0,
$$
 and these isomorphisms induce isomorphisms between the graph
$$
G(   \mathcal U_{V(h - \eta + i  )} (\eta) ,  \mathcal B^-_{V(h - \eta + i  )} (\eta)\cup
 \mathcal B^+_{V(h - \eta + i  )} (\eta   ))
$$
and the graph
$$
{G}(     {\mathcal U}_{ \widetilde{V}(\left \lfloor \tfrac{\scriptscriptstyle H}{\scriptscriptstyle 2} \right \rfloor  - \eta + i  )} (\eta)      ,  {\mathcal B}^-_{ \widetilde{V}(\left \lfloor \tfrac{\scriptscriptstyle H}{\scriptscriptstyle 2} \right \rfloor  - \eta + i  )} (\eta) \cup
 {\mathcal B}^+_{ \widetilde{V}(\left \lfloor \tfrac{\scriptscriptstyle H}{\scriptscriptstyle 2} \right \rfloor  - \eta + i  )} (\eta  )) ,   \qquad  0 < i \leq  \eta \leq h_0,
$$
that produce a length preserving bijection between the set of words in
$\mathcal C_{V(h - \eta + i)}$ of length  $2\eta$ and 
the set of words in
$\mathcal C_{\widetilde{V}(\left \lfloor \tfrac{\scriptscriptstyle H}{\scriptscriptstyle 2} \right \rfloor   - \eta + i)}$ of length $2\eta$, $0 < i \leq  \eta \leq h_0$.
In particular one has, that
\begin{align*}
 \card(\mathcal C_V(\xi))
= 
 \card(\widetilde{\mathcal C}_{\Phi_G(V)}(\xi)), \quad
 V \in \mathcal U(G), \    1 \ \leq \xi \leq \eta \leq h_0. \tag {3.1}
\end{align*}

Supplementing the surjection
$$
\widehat {\Phi}_G^{(\alpha)}: \mathcal U^{(\alpha)}_{V(h - h_0 +1   )}(2h_0 - 1) \to
{\mathcal U}
_{\widetilde{V}( {\left \lfloor \tfrac{\scriptscriptstyle H}{\scriptscriptstyle 2} \right \rfloor } - h_0 +1   )}(2h_0 - 1), \qquad \alpha \in \{0, 1\}, 
$$
with the surjection
$$
\Psi_G^{(\alpha)}: \mathcal B_{V(h - h_0 +1   )}(2h_0 - 1) \to
{\mathcal B}
_{\widetilde{V}( {\left \lfloor \tfrac{\scriptscriptstyle H}{\scriptscriptstyle 2} \right \rfloor } - h_0 +1   )}(2h_0 - 1), 
$$  
that is given by
\begin{align*}
\Psi_G^{(\alpha)} ( f_{\alpha,\beta}  ) = \widetilde{e}_{\alpha,\beta} , \qquad \beta \in \{0, 1\}, 
 \tag{3.2}
\end{align*}
and by
\begin{align*}
&\Psi_G^{(\alpha)}(f_{\alpha,\beta}(i))  
= \widetilde{f}(i), \quad 1 \leq i \leq H - h - h_0, 
\\
&\Psi_G^{(\alpha)}(e_{\alpha,\beta}) =
  \widetilde{f}(H - h -h_0 +1), \qquad\beta \in \{0, 1\},
\end{align*}
and
\begin{align*}
\Psi_G^{(\alpha)}(f(i)) = \widetilde{f}(i + 2h_0 - h - H), \quad 0 < i <H - h - h_0,
\end{align*}
yields an isomorphism between the graph 
$$
G( \mathcal U_{V(h - h_0 +1   )}(2h_0 - 1)  ,   \mathcal B_{V(h - h_0 +1   )}(2h_0 - 1)    )
$$
and the graph
$$
G( {\mathcal U}
_{\widetilde{V}( {\left \lfloor \tfrac{\scriptscriptstyle H}{\scriptscriptstyle 2} \right \rfloor } - h_0 +1   )}(2h_0 - 1)  ,   {\mathcal B}
_{\widetilde{V}( {\left \lfloor \tfrac{\scriptscriptstyle H}{\scriptscriptstyle 2} \right \rfloor } - h_0 +1   )}(2h_0 - 1)    ).
$$
Restricting this isomorphism yields homomorphisms between the graph
$$
G(   \mathcal U_{V_0(h_0 - i  )} (\eta) ,  \mathcal B_{V_0(h_0 - i  ) )} (\eta)    )
$$
and the graph
$$
 {G}(     \widetilde{\mathcal U}_{V_0(\left \lfloor \tfrac{\scriptscriptstyle H}{\scriptscriptstyle 2} \right \rfloor  - \eta + i  )} (\eta)      ,   \widetilde{\mathcal B}_{V_0(\left \lfloor \tfrac{\scriptscriptstyle H}{\scriptscriptstyle 2} \right \rfloor  - \eta + i  )} (\eta)   ) ,   \qquad  0 < i \leq  \eta \leq h_0,
$$
and these isomorphisms induce isomorphisms between the graph
$$
G(   \mathcal U_{V_0(h_0 -  i  )} (\eta) ,  \mathcal B^-_{V(h_0 -  i  )} (\eta)\cup
 \mathcal B^+_{(h_0 -  i  )} (\eta   ))
$$
and the graph
$$
 {G}({\mathcal U}_{ \widetilde{V}(\left \lfloor \tfrac{\scriptscriptstyle H}{\scriptscriptstyle 2} \right \rfloor  - \eta + i  )} (\eta)      ,  
 {\mathcal B}^-_{ \widetilde{V}
 (\left \lfloor \tfrac{\scriptscriptstyle H}{\scriptscriptstyle 2} \right \rfloor  - \eta + i  )} (\eta) \cup
 {\mathcal B}^+_{ \widetilde{V}(\left \lfloor \tfrac{\scriptscriptstyle H}{\scriptscriptstyle 2} \right \rfloor  - \eta + i  )} (\eta  )) ,   \qquad  0 < i \leq  \eta \leq h_0,
$$
which yield,  as a consequence of (3.2),
a length preserving bijection between the set of words in
$\mathcal C_{V(h - \eta + i)}$ of length less than or equal to $2\eta$ and 
the set of words in
$\mathcal C_{\widetilde{V}(\left \lfloor \tfrac{\scriptscriptstyle H}{\scriptscriptstyle 2} \right \rfloor   - \eta + i)}$ of length less than or equal to $2\eta$, $0 < i \leq  \eta \leq h_0$.
In particular one has, that
\begin{align*}
 \card( \mathcal C_V (\xi)            \})
= 
 \card(\{\widetilde {\mathcal C}_{\Phi_G^{(\alpha)}(V)}(\xi))   ,
 \tag {3.3}
 \ \ V \in \mathcal U^{{(\alpha)}}(G), \   1 \leq \xi \leq \eta \leq h_0.
\end{align*}

Let $ \alpha,\beta ,\widetilde{\alpha},\widetilde{\beta} \in \{0, 1  \}$. Choose a bijection
$\bar \Phi : \mathcal V^{(e_{\alpha,\beta})} \to 
 \mathcal V^{(\widetilde{e}_{\widetilde{\alpha},\widetilde{\beta}})}$ such that
$$
\bar \Phi \restriction  \mathcal U(G) = \Phi_G,  \ \
\bar \Phi \restriction \mathcal U^{(\alpha)}(G) = \Phi_G^{(\alpha)}.
$$
A vertex $V \in \mathcal V^{(e_{\alpha,\beta})}$ is in 
$
\mathcal V^{(e_{\alpha,\beta})}  \setminus( \mathcal U(G)  \cup  \mathcal U^{(\alpha)}(G)  )
$
 if and only if the graph $G(\mathcal U_V (h_0) ,\mathcal D_V(h_0)  )$ is a tree with one leaf,
 and 
 a vertex $\widetilde{V} \in \mathcal V^{(\widetilde{e}_{\widetilde{\alpha},\widetilde{\beta}})}$ is in 
$
\mathcal V^{(\widetilde{e}_{\widetilde{\alpha},\widetilde{\beta}})}  \setminus(\widetilde{ \mathcal U}(\widetilde{G})  \cup  {\mathcal U}^{(\widetilde{\alpha})}(\widetilde{G})  )
$
 if and only if the graph $G(\mathcal U_{\widetilde{V}} (h_0) ,\mathcal D_{\widetilde{V}}(h_0)  )$ is a tree with one leaf. 
 As a consequence it follows from (3.1) and (3.3) that
\begin{align*}
\card( \mathcal C_V(\xi)) 
= 
 \card(\widetilde{\mathcal C}_{\bar\Phi(V)}(\xi) ) ,
 \ \ V \in \mathcal V^{(e_{\alpha, \beta})},    \quad 1 \leq \xi \leq \eta \leq h_0.
\end{align*}
This implies, that
\begin{align*}
 I^{(e_{\alpha, \beta})}_{H + 1 + 2\eta } =
 I^{{(\widetilde{e}_{\widetilde{\alpha}, \widetilde{\beta}}})}_{H + 1 + 2\eta}
 , \qquad 1 \leq \eta \leq h_0.
 \end{align*}
From this and from the structure of the orbits in $\mathcal O^{(e_{\alpha, \beta})}_{H + 1 + 2(h_0 + 1)}$
( $\mathcal O^{(e
_{\widetilde{e}_{\widetilde{\alpha}, \widetilde{\beta}}}))}_{H + 1 + 2(h_0 + 1)} $) it is seen, that
\begin{align*}
I^{(e_{\alpha, \beta})}_{H + 1 + 2(h_0 + 1)}
 = I^{(e_{\alpha, \beta})}_{H + 1 + 2h_0 }+ 
 \sum_{V \in \mathcal V^{{(e_{\alpha, \beta})}}}
  \card(\mathcal D_V^{(h_0 + 1)})  = &
  \\
 I^{(\widetilde{e}_{\widetilde{\alpha}, \widetilde{\beta}})}_{H + 1 + 2h_0 }+
  \sum_{V \in \mathcal V^{{(e_{\alpha, \beta})}}}
  \card(\mathcal D_V^{(h_0 + 1)}) = &
  \\
 I^{(\widetilde{e}_{\widetilde{\alpha}, \widetilde{\beta}})}_{H + 1 + 2(h_0 +1) }+
  \sum_{V \in \mathcal V^{{(e_{\alpha, \beta})}}}
  \card(\mathcal D_V^{(h_0 + 1)}&) - \sum_{\widetilde{V} \in \mathcal V^{{(\widetilde{e}_{\widetilde{\alpha}, \widetilde{\beta}})}}}
  \card(\mathcal D_{\widetilde{V}}^{(h_0 + 1)}).
\end{align*}
A path count yields the following result: If $h_1 = h_0$, then 
$$
 \sum_{V \in \mathcal V^{{(e_{\alpha, \beta})}}}
  \card(\mathcal D_V^{(h_0 + 1)}) - \sum_{\widetilde{V} \in \mathcal V^{{(\widetilde{e}_{\widetilde{\alpha}, \widetilde{\beta}})}}}
  \card(\mathcal D_{\widetilde{V}}^{(h_0 + 1)}) = 4,,
$$
and if $h_1 > h_0$, then
$$
 \sum_{V \in \mathcal V^{{(e_{\alpha, \beta})}}}
  \card(\mathcal D_V^{(h_0 + 1)}) - \sum_{\widetilde{V} \in \mathcal V^{{(\widetilde{e}_{\widetilde{\alpha}, \widetilde{\beta}})}}}
  \card(\mathcal D_{\widetilde{V}}^{(h_0 + 1)}) = 2.
$$
It follows, that the parameter $h_0$ can be determined from invariants of topological conjugacy of 
$M\negthinspace {\scriptstyle D}({{G}})$ as follows: 
If 
$$
I^{(e_{\alpha, \beta})}_{H + 1 + 2\eta} =
I^{(\widetilde{e}_{\widetilde{\alpha}, \widetilde{\beta}})}_{H + 1 + 2\eta }
, \quad 1\leq \eta \leq   \left \lfloor \tfrac{H}{2} \right \rfloor, 
$$
then $G = \widetilde{G}$. Otherwise
\begin{align*}
h_0 = \min \{  \eta \in [1,  \left \lfloor \tfrac{H}{2} \right \rfloor) : 
I^{(e_{\alpha, \beta})}_{H + 1 + 2(\eta +1)} >
I^{(\widetilde{e}_{\widetilde{\alpha}, \widetilde{\beta}})}_{H + 1 + 2(\eta + 1) } \}. \tag {3.4}
\end{align*}
If 
$$
I^{(e_{\alpha, \beta})}_{H + 1 + 2(\eta +1)} =
I^{(\widetilde{e}_{\widetilde{\alpha}, \widetilde{\beta}})}_{H + 1 + 2(\eta + 1) } + 4, 
$$
then $h_1 = h_0$. If 
$$
 I^{(e_{\alpha, \beta})}_{H + 1 + 2(\eta +1)} =
I^{(\widetilde{e}_{\widetilde{\alpha}, \widetilde{\beta}})}_{H + 1 + 2(\eta + 1) }  + 2, 
$$
then $h_1 > h_0$.

In the case, that $h_1 > h_0$,  we apply the same argument to determine the value of the parameter $h_1$. The maximal  value of $h_1$, that is compatible with the value for 
$h_0(M\negthinspace {\scriptstyle D}({{G}})))$ as  given by (3.4), is given by
$$
\widetilde{\widetilde {h}}_1 =  \tfrac{5}{2}(\Lambda (M\negthinspace {\scriptstyle D}({{G}})) - 1) - \tfrac{1}{2}\tau (M\negthinspace {\scriptstyle D}({{G}})) - 
2h_0(M\negthinspace {\scriptstyle D}({{G}})).
$$
Set
$$
\widetilde{\widetilde {G}} = G( H, \left \lceil \tfrac{\scriptscriptstyle H}{\scriptscriptstyle 2} \right \rceil, h_0(M\negthinspace {\scriptstyle D}({{G}})), \widetilde{\widetilde {h}}_1 .) 
$$
(We put a double tilde into notations pertaining to $\widetilde{\widetilde {G}}$.) Let $ \alpha,\beta ,\widetilde{\alpha},\widetilde{\beta} \in \{0, 1  \}$.
If
$$
I^{(e_{\alpha, \beta})}_{H + 1 + 2(\eta +1)}  =
 I^{(\widetilde{\widetilde{e}}_{\widetilde{\alpha}, \widetilde{\beta}})}_{H + 1 + 2(\eta + 1) }   , \qquad
    h_0(M\negthinspace {\scriptstyle D}({{G}}))< \eta < \widetilde{\widetilde {h}}_1,
$$
then 
$$
h_1 = \widetilde{\widetilde {h}}_1,
$$
which means, that $G =  \widetilde{\widetilde {G}}  $.
Otherwise  $h_1$ is given by
$$
h_1 = \min \{\eta\in ( h_0, \widetilde{\widetilde {h}}_1):  
I^{(e_{\alpha, \beta})}_{H + 1 + 2(\eta +1)} >
  I^{(\widetilde{\widetilde{e}}_{\widetilde{\alpha}, \widetilde{\beta}})}_{H + 1 + 2(\eta + 1) }\}.
$$

By (A) the parameter $h$ can determined  from invariants of topological conjugacy of 
$M\negthinspace {\scriptstyle D}({{G}})$ by
$$
h = \frac{1}{3}(4\Lambda(M\negthinspace {\scriptstyle D}({{G}})) + \nu(M\negthinspace {\scriptstyle D}({{G}}))- h_0(M\negthinspace {\scriptstyle D}({{G}})) -h_1(M\negthinspace {\scriptstyle D}({{G}})) -
 I_2^{0}(M\negthinspace {\scriptstyle D}({{G}})) - 4). \qed
$$
\renewcommand{\qedsymbol}{}
\end{proof}

%section 4

\section{The Family $\bold{F}_{V}$}   

Let  $G=G(\mathcal V, \mathcal E)$, be a directed graph, such that $G(\mathcal V, \mathcal F_G)$ is a tree.
We set
 $$
 \beta(V) = D(V (0))+\sum_{1 \leq l \leq  \lambda(V)}D(t(f_l(V))), \qquad V \in \mathcal V.
 $$

We define the family $\bold{F}_{V}$  of directed graphs,  as the family, that contains the 
graphs  $G=G(\mathcal V, \mathcal E)$, such that $G(\mathcal V, \mathcal F_G),$ is a tree 
and that have an edge 
 $e\in  \mathcal E \setminus \mathcal F_G$, such that
\begin{align*}
\beta(s(e))  =\lambda (s (e))+ \card ( \mathcal E \setminus \mathcal F_G ) . \tag {4.1}
\end{align*}
We describe the canonical models that we use for the graphs in this family. 
For $k \in \Bbb N$ and $\ell \in \Bbb Z_+$, such that $\ell > k - 1$ we denote by 
$\Omega_{K, \ell }$ the set of K-tuples
$$
(\eta_k, (\mu_k(L))_{L \in \Bbb Z_+})_{1 \leq k \leq K}\in 
(\Bbb Z_+ \times \Bbb Z_+^{ \Bbb Z_+})^K,
$$
such that
\begin{align*}
&\eta_K \leq \ell,
\\
&\mu_K(L) =0, \qquad L > \ell - \eta _K,
\\
&\eta_k < \eta_{k+1}, \qquad 1 \leq k < K,
\\
&0 < \sum_{L \in \Bbb Z_+}\mu _k(L) ,\qquad 1 \leq k \leq K,
\\
&\sum_{1 \leq k \leq K,L \in \Bbb Z_+}\mu _k(L) < \infty.
\end{align*}
We define a set $\Pi_{IV}$ of data as the set of triples
$
(K, \ell, \omega),
$
with $K\in \Bbb N,\ell \in \Bbb Z_+$, 
such that $ \ell \geq K - 1$, and 
$
\omega 
\in \Omega_{K, \ell }.
$
Given data 
$$
(K, \ell,
\omega)\in \Pi_{IV}, \quad
\omega = (\eta_k, (\mu_k(L))_{L \in \Bbb Z_+})_{1 \leq k \leq K},
$$
we build a directed graph
$
G(K, \ell,  \omega).
$
We set
\begin{align*}
\mathcal V&(K, \ell, \omega) =  
\\
&\{V(h): 0 \leq h \leq   \ell      \} \  \cup 
\bigcup_{1 \leq k \leq K}\{  V_{k_,L}(m,l):1 \leq l \leq L , 1 \leq m \leq\mu_{k}(L),
 L\in \Bbb N   \},
\\
\mathcal F&(K, \ell,\omega) = 
\\
&\{f(h): 1 \leq h \leq    \ell\} \ \cup \bigcup_{1 \leq k \leq K}\{f_{k_,L}(m,l):1 \leq l \leq L , 1 \leq m \leq\mu_{k}(L), L \in \Bbb N \}, 
\\
\\
\mathcal E&^{(k)}(K, \ell, \omega) = 
\{e_{k, L}(m): 1 \leq m \leq \mu_k(L), L \in \Bbb Z_+\}, \quad 1\leq k \leq K,
\\
\mathcal E&(K, \ell, \omega) = 
\bigcup_{1 \leq k \leq K}\mathcal E^{(k)}(K, \ell, \omega)
 \cup \{  {e^\top} \},
\end{align*}
and we define the graph
$
G(K, \ell,  \omega)
$
as the directed graph with vertex set 
$\mathcal V(K, \ell,  \omega)$
 and edge set $\mathcal F(K, \ell,  \omega) \cup 
 \mathcal E(K, \ell,  \omega)$, and with 
source and target mappings, that are given by 
$$
s( f(h)) = V(h-1)  , \quad 1 \leq h\leq \ell , 
$$
$$
t(  f(h)) =   V(h)  ,\qquad \negthinspace \quad 0 \leq h< \ell  ,
$$
$$
s( f_{k_,L}(m,1)    ) = V(\eta_k), \quad 1 \leq m \leq M_k, 1\leq k \leq K,
$$
$$
s(  f_{k_,L}(m,l)  ) =  V_{k_,L}(m,l-1),\quad 1 < l \leq L,1 \leq m \leq \mu_k(L), 1\leq k \leq K,
$$
$$
t(  f_{k_,L}(m,l)  ) =  V_{k_,L}(m,l),\quad 1 \leq l \leq L,1 \leq m \leq \mu_k(L), 1\leq k \leq K, 
$$
and
$$
 s(e_{k,L,m}) = V_{k,L}(m,L), \quad 1 \leq m \leq \mu_k(L),1\leq k \leq K, 
$$
$$
 s(e_{k,0,m}) =  V(\eta_k), \quad 1 \leq m \leq \mu_k(0),1\leq k \leq K, 
$$
$$
t(e_{k,L,m}) =V_0,  \quad  1 \leq m \leq \mu_k(L),L \in \Bbb N,1 \leq  k \leq K,
$$
$$
s( {e^\top} ) = V(\ell), \quad t( {e^\top} ) = V(0). 
$$

The graph $G(K, \ell,  \omega)$ has a single subtree 
$G(\mathcal V(K, \ell,  \omega),  \mathcal F(K, \ell,  \omega)  )$.
One  has that
\begin{align*}
\card (\mathcal F(K, \ell,  \omega)) = \ell + \sum_{L\in \Bbb N,1 \leq k \leq K } L \mu_k(L).
\tag {4.2}
\end{align*}
The equality (4.1) holds in $G(K, \ell,  \omega)$ for $e = {e^\top} $ 
 and for  
 $e \in \mathcal E^{(K)}(K, \ell, \omega)$ and for no other edges.
We set
$$
M_k = \sum_{L \in \Bbb Z_+} \mu_k(L), \qquad 1 \leq k \leq K.
$$
One has that
\begin{align*}
\card(\mathcal E^{(k)}(K, \ell, \omega)) = M_k,  \qquad 1 \leq k \leq K, 
\end{align*}
and
\begin{align*}
\Delta^{(e)} = \sum_{1 \leq \kappa \leq k}M_k,  \qquad e \in \mathcal E^{(k)}(K, \ell, \omega)). \tag {4.3}
\end{align*}

We note special cases:
For $\ell = 0$ one obtains the single vertex graphs with more than one loop.
For $\ell \in \Bbb N, K= 1, \eta_1 = 0$ one obtains  bouquets of circles with the 
common point of the circles as the root of the  subtree. The subtree is a line graph precisely if $K = 1$ and $\eta_1 = \ell > 0$. Also note the non-empty intersection of the families $\bold F_I$ and $\bold F_{IV}$.

\begin{lemma}

Let $G=G(\mathcal V, \mathcal E)$ be a directed graph such that 
$G(\mathcal V, \mathcal F_G)$ is a tree. 
Let $G$ have an edge $ e\in\mathcal E  \setminus \mathcal F_G $ for which (4.1) holds.
Then there is an edge $ {e^\top}  \in \mathcal E  \setminus \mathcal F_G$ for which (4.1) holds, such that $s( {e^\top} ) $ is a leaf of the subtree.
\end{lemma}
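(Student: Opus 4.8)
The plan is to recast the defining equality (4.1) as a transparent combinatorial condition and then read off the lemma from that recasting. Write $D$ for the out-degree in $G$; since $G$ is strongly connected, $D(W)\geq 1$ for every $W\in\mathcal V$, so every quantity $D(W)-1$ is nonnegative. The first ingredient is a global count: because $G(\mathcal V,\mathcal F_G)$ is a tree on the vertex set $\mathcal V$, it has $\card(\mathcal F_G)=\card(\mathcal V)-1$ edges, whence
$$
\sum_{W\in\mathcal V}(D(W)-1)=\card(\mathcal E)-\card(\mathcal V)=\card(\mathcal E\setminus\mathcal F_G)-1 .
$$
The second ingredient is local: the path $f(s(e))$ from $V(0)$ to $s(e)$ passes through exactly the $\lambda(s(e))+1$ vertices $V(0),t(f_1(s(e))),\dots,t(f_{\lambda(s(e))}(s(e)))=s(e)$, and $\beta(s(e))$ is precisely the sum of $D(W)$ over these vertices. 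Hence $\sum_{W\in f(s(e))}(D(W)-1)=\beta(s(e))-\lambda(s(e))-1$, so (4.1) is equivalent to
$$
\sum_{W\in f(s(e))}(D(W)-1)=\card(\mathcal E\setminus\mathcal F_G)-1=\sum_{W\in\mathcal V}(D(W)-1).
$$
As all summands are nonnegative, this holds if and only if $D(W)=1$ for every vertex $W$ not lying on the path $f(s(e))$; equivalently, (4.1) says that every vertex of out-degree at least two lies on the path from $V(0)$ to $s(e)$.

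With this reformulation the lemma is nearly immediate. If $s(e)$ is already a leaf of the subtree, take $e^\top=e$. Otherwise $s(e)$ has a child, and since the subtree is finite we may descend along $\mathcal F_G$-edges from $s(e)$ to some leaf $W'$; by construction $W'$ lies strictly below $s(e)$, so the path $f(W')$ contains the path $f(s(e))$. A leaf is the source of no edge of $\mathcal F_G$ (such an edge would give it a child), while strong connectivity forces $W'$ to have at least one outgoing edge; hence that edge lies in $\mathcal E\setminus\mathcal F_G$, and we take it to be $e^\top$, so that $s(e^\top)=W'$ is a leaf. Finally, since $f(W')$ contains $f(s(e))$, it contains every vertex of out-degree at least two (all of which lie on $f(s(e))$ by the reformulation applied to $e$); invoking the reformulation in the other direction, (4.1) holds for $e^\top$.

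The only substantive content is the counting identity of the first paragraph and its matching with the path sum $\beta(s(e))$; once (4.1) has been recast as ``every vertex of out-degree at least two lies on the path to the source,'' the passage to a leaf is routine, since enlarging the source to any descendant can only enlarge the governing path and therefore preserves the condition. The point most likely to cause trouble is the offset bookkeeping, namely matching $\card(\mathcal E\setminus\mathcal F_G)-1$ against $\lambda(s(e))+1$; this is where an arithmetic slip would most easily creep in, so I would verify the vertex count of the path and the tree edge count $\card(\mathcal F_G)=\card(\mathcal V)-1$ with particular care.
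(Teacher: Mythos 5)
Your proof is correct and follows essentially the same route as the paper's: descend from $s(e)$ to a leaf $W'$ of the subtree, observe that (4.1) forces $D(W')=1$ so its unique outgoing edge is a non-tree edge $e^\top$, and check that (4.1) persists. The only difference is that you make explicit the counting identity $\sum_{W\in\mathcal V}(D(W)-1)=\card(\mathcal E\setminus\mathcal F_G)-1$ behind the reformulation of (4.1) as ``all vertices of out-degree $\geq 2$ lie on the path to $s(e)$,'' which the paper's very terse proof leaves implicit.
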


\begin{proof}
In the case, that $s(e)$ is not an leaf of $G(\mathcal V, \mathcal F_G)$, there is a leaf 
$V$ of $G(\mathcal V, \mathcal F_G)$, that can be reached from the source vertex  of
$e$.  
The equality (4.1) implies, that $D(V)= 1$, and  that
(4.1) also holds for the outgoing edge of $V$.
\end{proof}

\begin{lemma}

For a graph $G(\mathcal V  , \mathcal E )$ such that $G (\mathcal V, \mathcal F_G)$ is a tree, 
and that has an edge $e\in\mathcal E \setminus \mathcal F$ such that
$$
 \beta(s(e)) = \lambda(s(e))+\card(  \mathcal E \setminus \mathcal F_G )
$$
there exist uniquely data 
$(K, \ell, (\eta_k, (\mu_k(L))_{L \in \Bbb Z_+})_{1 \leq k \leq K}) \in \Pi_{IV}$,
 such that $G(\mathcal V  , \mathcal E )$ is isomorphic to 
 $G(K, \ell, (\eta_k, (\mu_k(L))_{L \in \Bbb Z_+})_{1 \leq k \leq K})$.
\end{lemma}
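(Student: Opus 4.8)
The plan is to read the defining equality (4.1) as a statement that forces all of the out-degree excess of $G$ onto a single root-to-leaf path, and then to read off the canonical data from the shape of the resulting ``spine with legs.'' First I would invoke the preceding Lemma to replace $e$ by an edge $e^\top \in \mathcal E \setminus \mathcal F_G$ with $s(e^\top)$ a leaf of the subtree, for which (4.1) still holds; this costs nothing and lets me take the path from the root $V(0)$ to $s(e^\top)$ as a candidate spine. Recall also that, since $G(\mathcal V,\mathcal F_G)$ is a single tree, $\mathcal R_G=\{V(0)\}$ and hence every edge of $\mathcal E\setminus\mathcal F_G$ targets $V(0)$. The arithmetic backbone of the argument is the identity
\begin{equation*}
\sum_{V \in \mathcal V}(D(V) - 1) = \card(\mathcal E) - \card(\mathcal V) = \card(\mathcal E \setminus \mathcal F_G) - 1,
\end{equation*}
which uses that the tree $G(\mathcal V, \mathcal F_G)$ has $\card(\mathcal V) - 1$ edges. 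Since $\beta(s(e^\top)) - (\lambda(s(e^\top)) + 1)$ equals the sum of $D(V)-1$ over the $\lambda(s(e^\top)) + 1$ vertices of the path from $V(0)$ to $s(e^\top)$, condition (4.1) says precisely that this path-restricted sum already equals the total $\card(\mathcal E \setminus \mathcal F_G) - 1$. As every vertex of a strongly connected graph has $D(V) \geq 1$, the off-path excess vanishes: every branch point (vertex with $D(V) > 1$) lies on the spine, and every vertex off the spine has out-degree exactly one.

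With this structural dichotomy in hand I would extract the data. The branch points, being all on one root-to-leaf path, form a chain; listing them by increasing level gives vertices $B_1, \ldots, B_K$ and I set $\eta_k = \lambda(B_k)$, so that $\eta_1 < \cdots < \eta_K$. Below $B_K$ every non-leaf vertex has out-degree one, so the subtree there is a disjoint union of simple paths (``legs'') emanating from $B_K$, each terminating in a leaf whose unique outgoing edge is a return edge to $V(0)$; the same description holds for the legs hanging off each $B_k$. I would choose a leg of maximal length at $B_K$ to serve as the continuation of the spine to $s(e^\top) = V(\ell)$, setting $\ell = \eta_K + (\text{that maximal length})$, and I would let $e^\top$ be the return edge of that chosen leaf. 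Finally I set $\mu_k(L)$ equal to the number of legs of length $L$ attached at $B_k$, with the one leg promoted to the spine omitted from the count at $k = K$. Maximality of the promoted leg is exactly what yields the normalization $\mu_K(L) = 0$ for $L > \ell - \eta_K$, and the remaining constraints defining $\Omega_{K, \ell}$ and $\Pi_{IV}$ (positivity $\sum_{L}\mu_k(L) > 0$ because each $B_k$ is a genuine branch point, finiteness, and $\ell \geq \eta_K \geq K - 1$) are then routine. Matching the spine vertices, leg vertices, tree edges and return edges of $G$ with the corresponding symbols of $G(K, \ell, \omega)$ produces the desired isomorphism.

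For uniqueness I would argue that the data are determined by the isomorphism class of $G$. The number $K$ and the levels $\eta_k$ are the number and levels of the branch points; $\ell$ is the largest level of a leaf satisfying (4.1), equivalently the maximal depth of a leaf below $B_K$; and $\mu_k(L)$ is the leg-length count at $B_k$, with one maximal-length leg at $B_K$ removed. All of these are invariants of $G$ once one checks that (4.1) singles out exactly the return edges of the leaves lying below $B_K$ together with $e^\top$. The only genuine freedom is the choice of which maximal-length leg at $B_K$ is promoted to the spine when several have the same length, and I expect this to be the main point to nail down: I would verify that any two such choices yield isomorphic canonical graphs, since the promoted leg and a non-promoted leg of equal length are interchangeable, so that $(K, \ell, \omega)$ is well defined independently of the choice. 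Verifying this interchangeability, together with the bookkeeping that (4.1) holds for $e^\top$ and for the edges of $\mathcal E^{(K)}$ and for no others, is where the bulk of the careful checking lies.
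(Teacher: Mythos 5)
Your proposal is correct and follows essentially the same route as the paper: invoke Lemma 4.1, take $e^\top$ with $s(e^\top)$ a leaf at maximal level, use the path $b(e^\top)$ as the spine, and read off $K$, $\ell$, the $\eta_k$ and the $\mu_k(L)$ from its branch points and the attached legs. Your explicit degree-sum identity (showing that (4.1) forces every vertex of out-degree greater than one onto the spine) and your check that the choice among maximal-length legs does not affect the data are exactly the points the paper leaves implicit, so you have if anything supplied more justification than the original.
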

\begin{proof}
By Lemma 4.1 we can
choose a edge $ {e^\top} \in \mathcal E \setminus \mathcal F_{G}$, such that 
$s( {e^\top} )$ is a leave of $G(\mathcal V, \mathcal F_G)$, 
such that (4.1) holds for $ {e^\top} $, and such that  
$\lambda(s( {e^\top} ))$ is maximal. 

The data $(K, \ell, (\eta_k, (\mu_k(L))_{L \in \Bbb Z_+})_{1 \leq k \leq K})$ are determined by $b(  {e^\top}  )$, and as $ {e^\top} $ satisfies (4.1), they do not depend on the choice of 
$ {e^\top} $. Let $\mathcal V(  {e^\top}  )$ denote the set of vertices, that are seen by 
$b(  {e^\top}  )$, and that have more than one outgoing edge.  
One has, that
$$
K =\card (\mathcal V( {e^\top} )), \quad
\ell = \lambda(   {e^\top} ).
$$
Set
$$
b(  {e^\top}  ) = (f(l))_{1\leq l \leq \ell},
$$
$$
V(l) = t(f(l) ), \quad 0 < l \leq \ell.
$$
One has, that
$$
\{\eta_k: 1 \leq k \leq K  \} = \{ \lambda (V): V \in \mathcal V(  {e^\top}  )  \}.
$$
Let $\mathcal E^{(k)}( {e^\top} )$ denote the set of $e\in \mathcal E \setminus \mathcal F$, such that $e \neq  {e^\top} $, and such that $b(e)$ traverses $V(\eta_k)$, or such that, in the case $k = 1, \eta_1 = 0$,  that $b(e)$ does not transverse any of the vertices $V(\eta_k  ), 1< k \leq K $.
One has, that
\begin{align*}
&\mu_k(0) = \card( \{   e \in    \mathcal E \setminus \mathcal F: s(e) = V(\eta_k)\}),
\\
&\mu_k(L) = \card(\{  e \in   \mathcal E^{(k)}(  {e^\top}  ) : \lambda(s(e))= \eta_k + L  \}), \quad 1\leq k \leq K,
L \in \Bbb N. \qed
\end{align*}
\renewcommand{\qedsymbol}{}
\end{proof}

\begin{theorem}
For directed graphs $G=G(\mathcal V  , \mathcal E )$, such that  
$\mathcal S(M\negthinspace {\scriptstyle D}({{G}}))$
 is  a Dyck inverse monoid,
  and such that there is an  
  $e\in  \mathcal M\negthinspace {\scriptstyle D}({{G}}) $,
   such that
\begin{align*}
\Delta^{(e)}=  
 \nu (M\negthinspace {\scriptstyle D}({{G}})) -1, \tag {4.4}
\end{align*}
the topological  conjugacy of the Markov-Dyck shifts 
$M\negthinspace {\scriptstyle D}({{G}})$ implies the isomorphism of the graphs 
$G$.
\end{theorem}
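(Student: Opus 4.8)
The plan is to reduce to the canonical models of Lemma 4.3 and then to show that all of the data $(K,\ell,\omega)$ can be read off from invariants of topological conjugacy of $M\negthinspace{\scriptstyle D}({G})$. The starting point is a pair of identities for a length-one multiplier $\widehat e$, $e\in\mathcal E\setminus\mathcal F_G$. Because the unique shortest orbit carrying $\widehat e$ is $o^{(\widehat e)}=(f^-(s(e)),e^-)$, one has $\Lambda^{(\widehat e)}=\lambda(s(e))+1$; and because every orbit in $\mathcal O^{(\widehat e)}_{\Lambda^{(\widehat e)}+2}$ arises from $o^{(\widehat e)}$ by inserting one length-two word $d^-d^+$ with $s(d)=V$ at a vertex $V\in\mathcal V^{(e)}$, one has $I^{(\widehat e)}_{\Lambda^{(\widehat e)}+2}=\sum_{V\in\mathcal V^{(e)}}D(V)=\beta(s(e))$. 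Hence
\[
\Delta^{(\widehat e)}=\beta(s(e))-\Lambda^{(\widehat e)}=\beta(s(e))-\lambda(s(e))-1 ,
\]
and, since $\nu(M\negthinspace{\scriptstyle D}({G}))=\card(\mathcal E\setminus\mathcal F_G)$, condition (4.4) holds for $\widehat e$ exactly when (4.1) holds for $e$. Thus a graph satisfying the hypotheses lies in $\bold F_{V}$ and, by Lemma 4.3, is isomorphic to a unique $G(K,\ell,\omega)$. As the assignment of multipliers and the orbit counts $I^{(\widehat e)}_k$ are invariants of topological conjugacy, it remains to recover $(K,\ell,\omega)$ from them.

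First I would extract the coarse data. The length-one multipliers are $\widehat{\mathcal E}(G)$, so $\nu=\card(\mathcal M_1(M\negthinspace{\scriptstyle D}({G})))$. By (4.3) the number $\Delta^{(\widehat e)}$ is constant on each class $\mathcal E^{(k)}$, with value $\sum_{\kappa\le k}M_\kappa$, while $\Delta^{(e^\top)}=\nu-1=\sum_{\kappa\le K}M_\kappa$; since each $M_k\ge1$ these partial sums are strictly increasing, so the distinct values of $\Delta^{(\widehat e)}$ are numbers $\delta_1<\cdots<\delta_K$ that give $K$ as their count and $M_k=\delta_k-\delta_{k-1}$ (with $\delta_0=0$). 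Because the source of a class-$K$ edge has level at most $\ell$ while $s(e^\top)=V(\ell)$, we also get $\ell=\max\{\Lambda^{(\widehat e)}:\Delta^{(\widehat e)}=\nu-1\}-1$. Finally, within the class $\mathcal E^{(k)}$ the multiset of numbers $\Lambda^{(\widehat e)}-1=\lambda(s(e))$ is exactly the multiset of source levels $\eta_k+L$ of the legs attached at the $k$-th branch point, each $L$ occurring $\mu_k(L)$ times; so once the level $\eta_k$ is known, the multiplicities $\mu_k(L)$ follow by subtraction. The identity (A) together with (4.2) provides the global consistency relation $\sum_{L,k}L\mu_k(L)=I^0_2(M\negthinspace{\scriptstyle D}({G}))-\ell-\nu$.

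The main obstacle is the recovery of the branch-point levels $\eta_k$. Equivalently, one knows the multiset of out-degrees along the trunk — it consists of $1$'s with spikes $1+M_k$ — and the trunk length $\ell$, but not the positions of the spikes. These positions are invisible to the coarse invariants: through length-one inserts the counts $I^{(e^\top)}_{\Lambda+2\eta}$ are symmetric functions of the trunk out-degrees and so cannot locate a spike. The information that breaks the symmetry is carried by the higher code-word counts $\card(\mathcal C_V(\xi))$, equivalently by the full sequences $I^{(\widehat e)}_{\Lambda+2\eta}$, $\eta\ge1$: a word of $\mathcal C_V$ is a balanced first return, so its descents look ahead into the subgraph below $V$. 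For a non-branch trunk vertex $V(i)$ one has $\card(\mathcal C_{V(i)}(2))=D(V(i+1))$, and in general $\mathcal C_{V(i)}$ reports, level by level, on $V(i),V(i+1),\dots$ and on the legs encountered; this positional look-ahead separates configurations with the same out-degree multiset but distinct spike positions.

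To turn this into a proof I would follow the comparison method of the proof of Theorem 3.3. I would fix, for the already-determined values of $K$, $\ell$ and the $M_k$, a reference canonical model $\widetilde G$ with the branch points placed at the extreme admissible levels, build graph isomorphisms between the neighborhoods $G(\mathcal U_V(\xi),\mathcal B_V(\xi))$ of $G$ and the corresponding neighborhoods of $\widetilde G$ for the relevant small radii $\xi$, and transport these to equalities between the code-word counts $\card(\mathcal C_V(\xi))$ of $G$ and the corresponding counts of $\widetilde G$. Each level $\eta_k$ is then pinned down, processing the classes in the order $k=1,\dots,K$ fixed by the increasing $\Delta$-values, as the first orbit length at which the counts $I^{(\widehat e)}_{\Lambda+2\eta}$ of $G$ and of the current reference graph diverge, exactly as the parameters $h_0$ and $h_1$ were located in the proof of Theorem 3.3. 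Once every $\eta_k$, and hence every $\mu_k(L)$, is recovered, the data $(K,\ell,\omega)$ — and with it, by Lemma 4.3, the isomorphism class of $G$ — is determined by invariants of topological conjugacy. I expect the bookkeeping in this last step, namely the precise path counts $\card(\mathcal C_V(\xi))$ and their interaction with the non-tree edges that return to the root, to be the technically demanding part.
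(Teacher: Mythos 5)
Your proposal is correct and follows essentially the same route as the paper: both reduce to the canonical models via the equivalence of (4.1) and (4.4) and the uniqueness lemma, recover $K$, $\ell$ and the $M_k$ from the distinct values of $\Delta^{(e)}$ and the maximal $\Lambda^{(e)}$ in the top class, and then locate the branch-point levels by comparing the sequences $I^{(e)}_{\Lambda^{(e)}+2d}$ against those of a reference canonical model (the paper normalizes $\eta_1$ to $0$ in its auxiliary data and detects each gap $\delta_k$ as the first $d$ where the counts diverge), finally pinning down $\eta_1$ from the global edge count via (4.2) and (A). Your observation that the counts through single inserts are symmetric in the trunk out-degrees, so that higher-order orbit counts are needed to break the symmetry, is exactly the point the paper's inductive procedure addresses.
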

\begin{proof}

Consider a graph  $G=G(\mathcal V  , \mathcal E )$  such that (4.9) holds. 
The equalities (4.1) and (4.4) are equivalent. By Lemma (4.2) there exist data
$$
(K, \ell,  \omega) \in \Pi_{IV}, \quad
\omega =  (\eta_k, (\mu_k(L))_{L \in \Bbb Z_+})_{1 \leq k \leq K}),
$$
such that 
$M\negthinspace {\scriptstyle D}({G})$
  is isomorphic to
  $M\negthinspace {\scriptstyle D}({G(K, \ell, \omega)})$.
The parameters $K, \ell$ are recovered from topological conjugacy invariants of 
$M\negthinspace {\scriptstyle D}({G})$ by
$$
K = \card(\{ \Delta^{(e)}: e \in 
\mathcal M(M\negthinspace {\scriptstyle D}({{G}})   \}),
$$
$$
\ell = \max_{\{e \in \mathcal M(M\negthinspace {\scriptscriptstyle D}({{G}})): 
 \Delta^{(e)}=  
 \nu (M\negthinspace {\scriptscriptstyle D}({{G}})) -1\}}(\Lambda^{(e)} - 1),
$$
and as a consequence of (4.3) one obtains the parameters $M_k, 1 \leq k \leq K,$ by 
$$
\{ \sum_{1 \leq \kappa \leq k}M_\kappa : 1 \leq k \leq K\} = 
 \{ \Delta^{(e)}: e \in \mathcal M(M\negthinspace {\scriptstyle D}({{G}}))    \}.
$$

In the case that $K = 1$, one  obtains  $\omega =(\eta_1,(\mu_1(L))_{L \in \Bbb Z_+})$ from 
topological conjugacy invariants of $M\negthinspace {\scriptstyle D}({{G}})$, by
$$
\eta_1 = 
\frac{-I_2^{(0)}(M\negthinspace {\scriptstyle D}({{G}})) + 
 \sum_{e \in \mathcal M(M\negthinspace {\scriptscriptstyle D}({{G}}))}\Lambda^{(e)}}{M_1},
$$
which is  a consequence of (4.2),
and by
$$
\mu_1(L) = \card (\{ e \in \mathcal M( M\negthinspace {\scriptstyle D}({{G}} ):
\Lambda^{(e)}= \eta_1 + L + 1 \}), \quad 0 \leq L \leq \ell +1 -  \eta_1,
$$
which is  a consequence of (4.2).

In the case, that $K > 1$, we associate to the data 
$(K, \ell, \omega)$ a sequence 
\begin{align*}
(\widetilde{K}^{\langle k \rangle},\widetilde{ \ell}^{(k)},
\widetilde{ \omega}^{\langle k \rangle})
 \in \Pi_{IV}, \ \widetilde{ \omega}^{\langle k \rangle}  =  (\widetilde{\eta}^{\langle k \rangle}_\kappa, (\widetilde{\mu}^{\langle k \rangle}_\kappa(L))_{L \in \Bbb Z_+})_{1 \leq \kappa \leq k}),\ \ 1 \leq k \leq K,
\end{align*}
of auxiliary data by
\begin{align*}
&\widetilde{K}^{\langle k \rangle} = k, \\
&\widetilde{\ell}^{\langle k \rangle}= \ell, \\
&\widetilde{\eta}^{\langle k \rangle}_\kappa = \eta_\kappa - \eta_1, \quad   1\leq \kappa \leq k, \\
&\widetilde{\mu}^{\langle k \rangle}_\kappa(L)  = \begin{cases} 0, &\text{if  $L< \eta_1  $},  \\
\mu_k(L-\eta_1),&\text{if  $L\geq \eta_1  ,$} 
\end{cases}\qquad  1 \leq k \leq \kappa, \quad 1 \leq \kappa \leq K.
\end{align*}
Note the one-to-one correspondence between 
$\mathcal E^{(k)}(\widetilde{K}^{\langle k \rangle},\widetilde{ \ell}^{\langle k \rangle}   ,\widetilde{\omega}^{\langle k \rangle}) $ and
$\mathcal E^{(k)}(K, \ell, \omega),$ $
1 \leq k \leq K$.
We set 
$$
\delta_k = \eta_{k+1}- \eta_k,  \qquad  1\leq k < K.
$$
The parameters $ \delta_k,1\leq k < K, $ together with the sequence 
$(\widetilde{K}^{\langle k \rangle},\widetilde{ \ell}^{(k)},
\widetilde{ \omega}^{\langle k \rangle}),1 \leq k \leq K,$
of auxiliary data 
can be determined from invariants of topological conjugacy of  
$M\negthinspace {\scriptstyle D}({{G}})$
by an inductive procedure. The start of the induction is given by
$$
\widetilde{\mu}^{\langle 1 \rangle}_1(L) = \card(\{e \in \mathcal M( M\negthinspace {\scriptstyle D}({{G}})): 
\Delta^{(e)} = M_1, \Lambda^{(e)} = L + 1 \}) , \quad L \in \Bbb Z_+,
$$
and the induction step is given by
\begin{multline*}
\delta_{k} = \min\{d \in \Bbb N: 
 I^{(e)}_{\Lambda^{(e)}+2d}  - \Lambda^{(e)} > 
 \\
 I^{(\widetilde{e})}_{\Lambda^{(\widetilde{e})}+2d}  - \Lambda^{(\widetilde{e})},
  e \in \mathcal M( M\negthinspace {\scriptstyle D}({{G})),
  \Delta^{(e)} = \sum_{1 \leq \kappa \leq k}M_\kappa},
  \widetilde{e} \in\mathcal E^{(k)}(\widetilde{K}^{\langle k \rangle},\widetilde{ \ell}^{\langle k \rangle}   ,\widetilde{\omega}^{\langle k \rangle}) \},
  \\
  1 \leq k< K,
\end{multline*}
and
\begin{align*}
\widetilde{\eta}^{(k+1)}_\kappa = \widetilde{\eta}^{(k)}_\kappa, \
(\widetilde{\mu}^{(k+1)}_\kappa(L))_{L \in \Bbb Z_+} =(\widetilde{\mu}^{(k)}_\kappa(L))_{L \in \Bbb Z_+}, \quad 1\leq \kappa \leq k,  \quad 1 \leq k < K,
\end{align*}
and by
\begin{align*}
&\widetilde{\eta}^{(k+1)}_{k+1} =  \widetilde{\eta}^{(k)}_{k} + \delta_{k} ,   
\\
&\widetilde{\mu}^{(k+1)}_{k+1}(L) = \card(\{e \in \mathcal  E^{(k+1)}(M\negthinspace {\scriptstyle D}({{G}})) ) :
\Lambda ^{(e)} = L + 1 +  \widetilde{\eta}^{(k)}_k  \}), \ L \in \Bbb Z_+,
1 \leq k < K.
\end{align*}

It follows from (4.2), that
$$
\eta_1 = \frac{  - I_2^{(0)} (M\negthinspace {\scriptstyle D}({{G}})))+ \ell +1 +
\sum_{1 \leq k \leq K,L \in \Bbb Z_+}(L+1)\widetilde{\mu}^{(k)}_k  (L)
}
{\sum_{1 \leq k \leq K}
M_k}. 
$$
The graph $G(K, \ell, \omega  )$ is reconstructed from the topological conjugacy class of its Markov-Dyck shift by
$$
\eta_k = \eta_1 + \sum _{1 \leq \kappa  < k}\delta_\kappa, \ \
(\mu_k(L))_{L \in \Bbb Z_+} =(\widetilde{\mu}^{(k)}_k  (\eta_1 + L))_{L \in \Bbb Z_+},  \quad 1\leq k \leq K. \qed
$$
\renewcommand{\qedsymbol}{}
\end{proof}

%section 5
	
\section{Spherically homogeneous directed graphs of height two}

We consider a directed graph   
 $G(\mathcal V  , \mathcal E )$ such that $G (\mathcal V, \mathcal F_G)$ is a tree, that has uniform height two. For spherically homogeneous graphs with a subtree of arbitrary  height see \cite{Kr5}.

\subsection{A criterion for spherical homogeneity for height two}

For an edge $e \in \mathcal E \setminus \mathcal F_G$ we denote by $\mathcal U^{(e)}_k$ the set of  cycles $a$ in $G( \mathcal V, \mathcal E^- \cup \mathcal E^+)$ at $V(0)$ of length $k \in \Bbb N$, such that the bi-infinite concatenation of $a$ yields a periodic point with multiplier
$\widehat{e}$. 
The out-degree of $V(0)$ we denote by $K$, and for  $e \in \mathcal E \setminus \mathcal F_G$ we denote
 the out-degree of $V^{(e)}_{1}$ by $L_e$, and the out-degree of $s(e)$ by $M_e$.

\begin{lemma}
\begin{align*}
I^{(e)}_5 = K + L_e + M_e, \qquad e \in \mathcal E \setminus \mathcal F_G. \tag {5.1}
\end{align*}
\end{lemma}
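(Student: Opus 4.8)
The plan is to count the length-five periodic orbits with negative multiplier $\widehat e$ directly, using the orbit-insertion description given at the end of Section 2. Since the subtree has uniform height two and the source $s(e)$ sits at level two (a leaf), the path from the root to $s(e)$ is $V^{(e)}_0 = V(0) \to V^{(e)}_1 \to V^{(e)}_2 = s(e)$; hence $\lambda(s(e)) = 2$ and $\mathcal V^{(e)} = \{V(0), V^{(e)}_1, s(e)\}$. The minimal orbit $o^{(\widehat e)} = (f^-(s(e)), e^-)$ has length $\lambda(s(e)) + 1 = 3$, so $\Lambda^{(\widehat e)} = 3$ and the orbits of length five are precisely those of $\mathcal O^{(\widehat e)}_{\Lambda^{(\widehat e)} + 2}$. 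By the correspondence recalled in Section 2, each such orbit arises from a unique choice of words $w_V \in \mathcal C^{*}_V$, $V \in \mathcal V^{(e)}$, with $\sum_{V \in \mathcal V^{(e)}} \ell(w_V) = 2$.

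First I would note that every nonempty word in $\mathcal C^{*}_V$ has even length at least two, so the constraint $\sum_{V} \ell(w_V) = 2$ forces exactly one vertex of $\mathcal V^{(e)}$ to carry a single code word from $\mathcal C_V(1)$ while the remaining two carry the empty word. Combined with the uniqueness in the insertion correspondence, this produces a bijection of $\mathcal O^{(\widehat e)}_5$ with the disjoint union $\bigsqcup_{V \in \mathcal V^{(e)}} \mathcal C_V(1)$, so that
$$
I^{(e)}_5 = \sum_{V \in \mathcal V^{(e)}} \card(\mathcal C_V(1)).
$$

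It then remains to show $\card(\mathcal C_V(1)) = D(V)$, the out-degree of $V$. By definition $\mathcal C_V(1)$ consists of the admissible length-two words $(w_1, w_2)$ over $\mathcal E^- \cup \mathcal E^+$ with $w_1 w_2 = \mathbf 1_V$. Reading off the defining relations — $f^- g^+ = \mathbf 1_{s(f)}$ for $f = g$ and $0$ otherwise, together with $\mathbf 1_{s(f)} f^- = f^- \mathbf 1_{t(f)}$ and $\mathbf 1_{t(f)} f^+ = f^+ \mathbf 1_{s(f)}$ — one sees that $f^- g^-$ and $f^+ g^+$ are descending, respectively ascending, two-paths, that $f^+ g^-$ is never equal to an idempotent $\mathbf 1_V$, and hence that the only possibility is $w_1 w_2 = f^- f^+$ with $s(f) = V$. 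These words are in bijection with the edges leaving $V$, giving $\card(\mathcal C_V(1)) = D(V)$. Substituting $D(V(0)) = K$, $D(V^{(e)}_1) = L_e$ and $D(s(e)) = M_e$ yields (5.1). The one point genuinely needing care is this last case analysis: one must rule out any unexpected length-two reduction to $\mathbf 1_V$ and, in particular, check that $f^+ f^-$ is a proper subidempotent rather than $\mathbf 1_{t(f)}$; everything else is the bookkeeping already packaged in the Section 2 orbit-insertion formalism.
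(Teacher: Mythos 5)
Your proof is correct and takes essentially the same route as the paper's: the paper's one‑line argument observes that every cycle in $\mathcal U^{(e)}_5$ arises from the minimal length‑three cycle by inserting a single return loop $g^-g^+$ at one of $V(0)$, $V^{(e)}_1$, $s(e)$, which is exactly your decomposition $I^{(e)}_5=\sum_{V\in\mathcal V^{(e)}}\card(\mathcal C_V(1))$ together with $\card(\mathcal C_V(1))=D(V)$. You merely make explicit, via the Section 2 insertion formalism, the parity argument and the verification that $f^+f^-$ is not an identity idempotent, which the paper leaves tacit.
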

\begin{proof}
For all 
$e \in \mathcal E \setminus \mathcal F_G({{G}})$ 
every cycle in $\mathcal U^{(e)}_5$ is obtained by inserting into the cycle 
$f^-_1(e) f^-_2 (e)e$
either a loop 
$\widetilde f^-_1 \widetilde f^+_1, 
s(\widetilde f^-_1) =V(0)$, at $V(0)$, or a loop 
$\widetilde f^-_2 \widetilde f^+_2,
 s(\widetilde f^-_2) =V^{(e)}_{1}$, 
 at $V^{(e)}_{1}$, 
 or else a loop
$\widetilde e^-  \widetilde e^+, s(\widetilde e^-) =s(e^-)$ at $s(e^-).$
\end{proof}

\begin{lemma}
Let $I^{(e)}_5$ have the same value for all 
$e \in \mathcal E \setminus \mathcal F_G$.
Then
\begin{align*}
\card \thinspace (\{\widetilde f  \widetilde e:  \widetilde f \in  \mathcal F_G, s(\widetilde f) 
=
 V^{(e)}_{1},
 \widetilde e \in \mathcal E \setminus \mathcal F_G, s(\widetilde e) = t(\widetilde f ) \})
 = L_e M_e, \ \
 e \in \mathcal E \setminus \mathcal F_G.
\end{align*}
\end{lemma}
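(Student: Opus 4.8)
My plan is to read the hypothesis through Lemma 5.1 and then exploit the fact that a single level-one vertex $V^{(e)}_{1}$ is the common ancestor of all the non-tree edges issuing from the leaves beneath it.

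First I would record the structural facts forced by the present setting. Since $G(\mathcal V, \mathcal F_G)$ is a single subtree with root $V(0)$, one has $\mathcal R_G = \{V(0)\}$, so every edge of $\mathcal E \setminus \mathcal F_G$ has target $V(0)$; and since the cycle-insertion description underlying Lemma 5.1 is available for every $e \in \mathcal E \setminus \mathcal F_G$, the source of each such edge must be a leaf of the subtree, hence at level two. Consequently the level-one vertex $V^{(e)}_{1}$ is not a leaf and emits no edge of $\mathcal E \setminus \mathcal F_G$, so all $L_e = D(V^{(e)}_{1})$ of its outgoing edges lie in $\mathcal F_G$ and reach $L_e$ distinct leaves; these leaves are exactly the vertices $t(\widetilde f)$ with $\widetilde f \in \mathcal F_G$ and $s(\widetilde f) = V^{(e)}_{1}$.

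Next I would bring in the hypothesis. By Lemma 5.1 and the constancy of the root out-degree $K$, equality of all $I^{(e)}_5$ is the same as constancy of $L_e + M_e$ over $e \in \mathcal E \setminus \mathcal F_G$. Fix $e$ and a leaf $W = t(\widetilde f)$ below $V^{(e)}_{1}$. Any $\widetilde e \in \mathcal E \setminus \mathcal F_G$ with $s(\widetilde e) = W$ has the same length-two path from the root, namely the one through $V^{(e)}_{1}$ to $W$, so $V^{(\widetilde e)}_{1} = V^{(e)}_{1}$ and hence $L_{\widetilde e} = L_e$. The constancy of $L_e + M_e$ then forces $M_{\widetilde e} = M_e$, and since $M_{\widetilde e} = D(W)$ this shows $D(W) = M_e$ for every leaf $W$ beneath $V^{(e)}_{1}$.

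Finally I would assemble the count. As $W$ is a leaf, all of its $D(W)$ outgoing edges lie in $\mathcal E \setminus \mathcal F_G$, so the number of $\widetilde e$ with $s(\widetilde e) = W$ equals $D(W) = M_e$; summing over the $L_e$ leaves $W = t(\widetilde f)$ produces the asserted value $L_e M_e$. I expect the only delicate point, and the heart of the argument, to be the identity $V^{(\widetilde e)}_{1} = V^{(e)}_{1}$: it is exactly what converts the hypothesis about $I^{(e)}_5$ (a statement about the sum $L_e + M_e$) into the required statement that every leaf under a common level-one vertex has out-degree $M_e$.
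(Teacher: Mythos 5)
Your proposal is correct and follows essentially the same route as the paper: the count is decomposed over the $L_e$ tree edges leaving $V^{(e)}_{1}$, and the key step in both arguments is that any non-tree edge $\widetilde e$ issuing from a leaf below $V^{(e)}_{1}$ satisfies $V^{(\widetilde e)}_{1}=V^{(e)}_{1}$, hence $L_{\widetilde e}=L_e$, so that Lemma 5.1 together with the constancy of $I^{(e)}_5$ forces $M_{\widetilde e}=M_e$ and the sum collapses to $L_eM_e$. Your preliminary structural remarks (sources of non-tree edges are leaves, so each leaf contributes exactly its out-degree) merely make explicit what the paper's chain of equalities uses implicitly.
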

\begin{proof} 
By Lemma 5.1 
\begin{align*}
&\card \thinspace (\{\widetilde f  \widetilde e:  \widetilde f \in  \mathcal F_G, s(\widetilde f) 
= V^{(e)}_{1},
 \widetilde e \in \mathcal E \setminus \mathcal F_G, s(\widetilde e) = t(\widetilde f ) \})=\\
& \sum_{\{ \widetilde f\in \mathcal F_G: s(\widetilde f) = V^{(e)}_{1}  \}}  
\card \thinspace ( \{ \widetilde e \in \mathcal E \setminus \mathcal F_G,s(\widetilde e) =
 t(\widetilde f)   \}   ) = \\
& \sum_{\{ \widetilde f\in \mathcal F_G: s(\widetilde f) = V^{(e)}_{1} \}} M_{\widetilde e} =
 \sum_{\{ \widetilde f\in \mathcal F_G: s(\widetilde f) = V^{(e)}_{1}  \}}    
  (I_5 - L_{\widetilde e} - K     )   =             \\
& \sum_{\{ \widetilde f\in \mathcal F_G: s(\widetilde f) = V^{(e)}_{1}  \}} 
(I
 _5 - L_{ e} - K     )=
 \sum_{\{ \widetilde f\in \mathcal F_G: s(\widetilde f) = V^{(e)}_{1}  \}} M_e = L_eM_e. \qed
\end{align*}
\renewcommand{\qedsymbol}{}
\end{proof}

\begin{lemma}
\begin{multline*}
I^{(e)}_9 =   
K( K^2 + 3KL_e + 2KM_e + 7L_eM_e + 2L^2_e + 2M^2_e) +\\
 L_e(L^2_e + 3L_eM_e + 2M^2_e) + M_e^3,\qquad 
 e \in \mathcal E \setminus \mathcal F_G. \tag{5.2}
\end{multline*}
\end{lemma}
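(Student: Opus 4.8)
The plan is to count the length-nine orbits with multiplier $\widehat e$ exactly as in Lemma 5.1, now inserting excursions of total semilength three instead of one. Since $G(\mathcal V,\mathcal F_G)$ has height two we have $\Lambda^{(e)} = 3$, and the minimal orbit is carried by the spine $f^-_1(e)\,f^-_2(e)\,e^-$; by the reconstruction recalled in Section 2 every orbit in $\mathcal O^{(e)}_{9}$ arises from a unique triple of return words $w_V \in \mathcal C^{*}_V$ attached at the three vertices of $\mathcal V^{(e)} = \{V(0), V^{(e)}_1, s(e)\}$ subject to $\sum_V \ell(w_V) = 6$. Writing $c_V(\xi) = \card\{w \in \mathcal C^{*}_V : \ell(w) = 2\xi\}$, this yields
\begin{align*}
I^{(e)}_9 = \sum_{\xi_0 + \xi_1 + \xi_2 = 3} c_{V(0)}(\xi_0)\,c_{V^{(e)}_1}(\xi_1)\,c_{s(e)}(\xi_2),
\end{align*}
so it suffices to determine $c_V(\xi)$ for $\xi \le 3$ at the three vertices.

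First I would record the first-return recursion. A word of $\mathcal C_V$ opens with a negative edge $a^-$, $s(a)=V$, runs through an arbitrary return word based at $t(a)$, and closes with $a^+$; hence $\card(\mathcal C_V(\xi)) = \sum_{a : s(a)=V} c_{t(a)}(\xi-1)$, while $c_V(0)=1$ and $c_V(\xi)=\sum_{j=1}^{\xi}\card(\mathcal C_V(j))\,c_V(\xi-j)$. At $\xi = 1$ this is precisely Lemma 5.1, giving $c_{V(0)}(1)=K$, $c_{V^{(e)}_1}(1)=L_e$, and $c_{s(e)}(1)=M_e$. Iterating through $\xi = 2,3$ expresses each $c_V(\xi)$ as a polynomial in the out-degrees met along the admissible descents from $V$.

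The delicate point, which I expect to be the main obstacle, is that $s(e)$ is a leaf, so its only outgoing edges are the $M_e$ edges of $\mathcal E \setminus \mathcal F_G$, all targeting the root $V(0)$; thus an excursion based at $s(e)$ of semilength $\ge 2$ climbs back to $V(0)$ and re-enters the whole graph, and likewise an excursion based at $V^{(e)}_1$ must first descend to the leaves beneath it before it can climb. Consequently the raw forms of $c_{s(e)}(\xi)$ and $c_{V^{(e)}_1}(\xi)$ carry non-local data, namely the sibling sum $\sum_{s(\widetilde f)=V^{(e)}_1} D(t(\widetilde f))$ and the corresponding sums at the root. The heart of the argument is to eliminate these: the height-two hypothesis together with Lemma 5.2 collapses the relevant sum at $V^{(e)}_1$ to the product $L_e M_e$, and the same counting performed one level up does the analogous job at the root, so that after substitution every non-local quantity disappears. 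Inserting the resulting values of $c_V(\xi)$ into the displayed sum over compositions of $3$ and collecting monomials in $K, L_e, M_e$ will then give (5.2); the remaining labor is the explicit enumeration of the semilength-two and semilength-three excursions at each vertex and the verification that this bookkeeping closes.
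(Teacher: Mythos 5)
Your framework coincides with the paper's: the paper's entire proof of (5.2) is the instruction to count the cycles in $\mathcal U^{(e)}_9$ by applying Lemma 5.2, and your convolution over compositions of $3$ of the counts $c_V(\xi)$ at the three spine vertices, with the first-return recursion and Lemma 5.1 as the base case, is the right way to organize that count. The gap is that you stop exactly where the lemma begins. For a statement whose entire content is a specific polynomial identity, ``inserting the resulting values \dots will then give (5.2)'' is not a proof: the enumeration of the semilength-two and semilength-three excursions, and the collection of the ten monomials with their (notably asymmetric) coefficients, \emph{is} the proof, and it has to be exhibited. The asymmetry of (5.2) in $K$, $L_e$, $M_e$ can only emerge from the values of the $c_V(\xi)$, and your text gives no indication of what those values are.

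Worse, the one substantive step you do describe is the step that fails as stated. Lemma 5.2 collapses $\sum_{\widetilde f}D(t(\widetilde f))$, summed over the children of $V^{(e)}_1$, to $L_eM_e$; you assert that ``the same counting performed one level up does the analogous job at the root.'' It does not. The quantity entering $c_{V(0)}(2)$ --- and, through the recursion, $c_{V(0)}(3)$ and $c_{s(e)}(3)$, since every excursion from $s(e)$ of semilength at least two climbs to the root and must continue with an excursion based there --- is $\sum_{f\colon s(f)=V(0)}D(t(f))$, the sum of the out-degrees of \emph{all} level-one vertices, not only of $V^{(e)}_1$. Constancy of $I^{(e')}_5$ over $e'$ forces only $D(W)+D(U)=I_5-K$ along each root-to-leaf path; it leaves the individual level-one degrees, hence their sum, undetermined, and there is no analogue of Lemma 5.2 one level up. If you track the coefficient with which this sum enters your convolution you will find that, unlike the squares $\sum_i D(W_i)^2$ (which do cancel), the linear sum survives with a nonzero coefficient. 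So your plan does not close without either an additional hypothesis --- which would be circular here, since spherical homogeneity is exactly what Subsection 5.1 is trying to detect from the $I^{(e)}_k$ --- or an argument you have not supplied. You need to either produce that argument or carry the root-level degree sum as an explicit unknown through the bookkeeping and confront the result with (5.2).
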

\begin{proof}
Count the cycles in  $\mathcal U^{(e)}_9 $ by applying Lemma 5.2.
\end{proof}

\begin{lemma}
\begin{align*}
I^{(e)}_{10} = K^2 + L^2_e + M^2_e + 3KL_e  + 3L_eM_e + 3KM_e , 
\qquad e \in \mathcal E \setminus \mathcal F_G. \tag {5.3}
\end{align*}
\end{lemma}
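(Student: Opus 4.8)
The plan is to establish (5.3) by the same cycle-counting method used for Lemmas 5.1--5.3, now carried out at length ten. First I would pin down what $I^{(e)}_{10}$ records: the periodic orbits of length ten with negative multiplier $\widehat e$, i.e.\ the cycles in $\mathcal U^{(e)}_{10}$ at $V(0)$. A cycle with multiplier $\widehat e$ reduces to a power $(\widehat e^-)^M$, and since reduction removes matched pairs in twos the length and $M$ share parity; moreover each of the $M$ unmatched symbols $\widehat e^-$ forces a length-two descent to $s(e)$ followed by the return edge $e^-$, so $3M\le 10$. With the length even this leaves only $M=2$. Hence every cycle in $\mathcal U^{(e)}_{10}$ is built on the doubled skeleton $f^-_1(e)\,f^-_2(e)\,e^-\,f^-_1(e)\,f^-_2(e)\,e^-$ of length six, into which neutral words of total length four (words in the codes $\mathcal C_V^{*}$) are inserted at the six vertex visits, occurring at $V(0)$, $V^{(e)}_{1}$, $s(e)$, each visited twice.

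Next I would enumerate the insertions and express each contribution through the out-degrees $K$, $L_e$, $M_e$. The inserted content of total length four splits into two internal types: either a single length-four first-return word lying below one skeleton vertex, or a pair of length-two loops $\widetilde f^-\widetilde f^+$ distributed over the six slots. A length-two loop at a visit to $V(0)$, $V^{(e)}_{1}$, $s(e)$ is counted by $K$, $L_e$, $M_e$ respectively, exactly as in Lemma 5.1. The length-four words that descend one further level are where out-degrees of non-skeleton vertices would intrude; there I would invoke the constancy of $I^{(e)}_5$ in $e$ together with Lemma 5.2 to collapse those stray out-degrees back to $K$, $L_e$, $M_e$, precisely as in the proof of Lemma 5.3 (an edge $\widetilde e$ issuing from a child of $V^{(e)}_{1}$ satisfies $L_{\widetilde e}=L_e$, and constancy of $I^{(e)}_5$ then forces $M_{\widetilde e}=M_e$). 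Collecting the descents and the pairs of loops produces a homogeneous polynomial of degree two in $K$, $L_e$, $M_e$.

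The delicate step, and the main obstacle, is the coefficient bookkeeping, which here differs from Lemmas 5.1 and 5.3 because for $M=1$ the length-three skeleton is primitive, whereas the doubled skeleton carries a $\Bbb Z_2$ symmetry interchanging its two halves. Two effects must be controlled. First, passing from cycles marked at $V(0)$ to orbits (equivalently, accounting for the two $V(0)$-visits) introduces a factor two that has to be tracked; in the regular case $K=L_e=M_e=d$ this is the reduction to the orbit value $12d^2$ predicted by (5.3). Second, the length-ten cycles that are squares of cycles in $\mathcal U^{(e)}_5$ must not be tallied separately: they arise automatically as the symmetric placements of two length-two loops and are already subsumed in the enumeration. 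Keeping these points straight is exactly what yields coefficient $1$ on $K^2$, $L^2_e$, $M^2_e$ and coefficient $3$ on $KL_e$, $L_eM_e$, $KM_e$. Once the placements are organized this way, summing the contributions and collecting terms gives $I^{(e)}_{10}=K^2+L^2_e+M^2_e+3KL_e+3L_eM_e+3KM_e$, as asserted.
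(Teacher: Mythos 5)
Your overall strategy is the paper's own: show that every length-ten cycle with multiplier $\widehat e$ carries the power $M=2$ (your parity-plus-bound argument is correct and is exactly what the paper's one-line proof asserts), and then count insertions of neutral words of total length four into the doubled skeleton, invoking Lemma 5.2 where out-degrees of non-skeleton vertices intrude. So the route is the same. The problem is that the proposal never actually performs the count: the final paragraph asserts that "keeping these points straight" yields the coefficients $1,1,1,3,3,3$, and those coefficients are the entire content of the lemma. Moreover, if one executes the enumeration exactly as you describe it — six slots of types $K,L_e,M_e,K,L_e,M_e$, tuples $(u_1,\dots,u_6)$ of neutral words of total length four, taken modulo the half-rotation, with the rotation-fixed tuples (the squares of length-five cycles) set aside — one obtains $\tfrac12\bigl(3(K^2+L_e^2+M_e^2)+6(KL_e+L_eM_e+KM_e)-(K+L_e+M_e)\bigr)$ rather than $K^2+L_e^2+M_e^2+3(KL_e+L_eM_e+KM_e)$; the two agree only when $K=L_e=M_e=1$. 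The difference is carried precisely by the configurations in which the two visits to the same skeleton vertex each receive a distinct length-two loop, and your sketch does not say how these are to be treated. Your claimed sanity check ("the reduction to the orbit value $12d^2$ in the regular case") is likewise asserted, not computed. So the decisive bookkeeping — the one step that distinguishes this lemma from Lemma 5.1 — is missing, and it is not a routine omission: it is exactly where the answer is decided.

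A second, smaller gap: the appeal to Lemma 5.2 to "collapse stray out-degrees back to $K$, $L_e$, $M_e$" is justified for length-four insertions below $V^{(e)}_1$ (Lemma 5.2 gives $\sum_{\widetilde f}D(t(\widetilde f))=L_eM_e$ there) and below $s(e)$ (where the descent lands at $V(0)$, of out-degree $K$). But a length-four insertion at a visit to $V(0)$ contributes $\sum_{W}D(W)$ over all children $W$ of the root, i.e. the number of level-two vertices; constancy of $I^{(e)}_5$ only forces $L_{e'}+M_{e'}$ to be constant in $e'$, not $L_{e'}=L_e$, so this sum is not $KL_e$ on the available hypotheses. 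You would need either to carry this quantity as a separate ($e$-independent) term or to explain why it may be replaced by $KL_e$; as written, your parenthetical remark covers only the descents below $V^{(e)}_1$.
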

\begin{proof}
For all $e \in \mathcal E \setminus \mathcal F_G$ every cycle in $\mathcal U^{(e)}_{10}$ traverses the edge $e$ twice. Count these cycles by  applying Lemma 5.2.
\end{proof}

\begin{lemma}
Let  $I^{(e)}_5$,$I^{(e)}_9$,  and $I^{(e)}_{10}$ have the same value for all 
$ e \in \mathcal E \setminus \mathcal F_G.$ Then
\begin{align*}
L_e = 6I_5 + 4K -4 -
           \frac{I_5}{K}(I_5 +1)+
            \frac{1}{I_5}(1+ 4I_{10}-3K^2) +\frac{1}{KI_5} ( I_9  + I_{10}  ).  \tag {5.4}
\end{align*}
\end{lemma}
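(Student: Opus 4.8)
The plan is to treat (5.1), (5.2), and (5.3) as a system of three polynomial equations in the three unknowns $K$, $L_e$, $M_e$, with the additional hypothesis that each of $I_5$, $I_9$, $I_{10}$ is independent of $e$. Since $K$ (the out-degree of $V(0)$) is intrinsically independent of $e$, the equation (5.1) already exhibits $L_e + M_e = I_5 - K$ as an $e$-independent quantity. The strategy is to produce from (5.2) and (5.3) enough symmetric-function data in $L_e$ and $M_e$ to solve for $L_e$ alone; the final formula (5.4) is linear in the global invariants $I_5, I_9, I_{10}, K$, so the real content is an algebraic elimination that expresses $L_e$ rationally in these.

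First I would abbreviate $L = L_e$, $M = M_e$ and set $S = L + M$, $P = LM$. From (5.1), $S = I_5 - K$, which is fixed. Next I would rewrite (5.3) in terms of $S$ and $P$: since $K^2 + L^2 + M^2 = K^2 + S^2 - 2P$ and $3KL + 3KM = 3KS$ and $3LM = 3P$, equation (5.3) becomes $I_{10} = K^2 + S^2 - 2P + 3KS + 3P = K^2 + S^2 + 3KS + P$, giving $P = I_{10} - K^2 - S^2 - 3KS$ in terms of known invariants. Thus both $S$ and $P$ are already determined, and in principle $L$ and $M$ are the two roots of $x^2 - Sx + P = 0$. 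The role of (5.2) is then to break the symmetry between $L$ and $M$ and to single out $L$ itself rather than the unordered pair $\{L,M\}$.

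The key step is therefore to reduce (5.2) modulo the relations $S = L+M$ and $P = LM$ to an expression that is \emph{linear} in $L$ (equivalently, linear in $L - M$). I would expand the right-hand side of (5.2) and regroup it using elementary symmetric functions: all the fully symmetric pieces collapse into polynomials in $K$, $S$, $P$ (hence into known invariants), while the asymmetric remainder should be proportional to $L - M$, or to $L$ after substituting $M = S - L$. Solving the resulting linear equation for $L$ and then substituting the already-found values $S = I_5 - K$ and $P = I_{10} - K^2 - (I_5-K)^2 - 3K(I_5 - K)$ should reproduce the stated closed form (5.4); the appearance of the denominators $K$, $I_5$, and $KI_5$ indicates that the elimination is carried out by dividing through by the coefficient of $L$, which factors as a product of $K$ and $I_5$.

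The main obstacle I anticipate is purely computational bookkeeping: equation (5.2) is a cubic with mixed monomials in three variables, and the regrouping into a symmetric part plus a part linear in $L$ must be done without error so that the symmetric part really does reduce to a polynomial in $K, S, P$. I expect the cleanest route is to compute the two auxiliary cubics $I_9$ evaluated formally, and also to use (5.3) to eliminate $P$ early, so that (5.2) is rewritten entirely in $K$, $L$, and $S = I_5 - K$ before any numerical substitution. One must also verify that the coefficient of $L$ does not vanish under the standing hypotheses, which is what legitimizes the division producing (5.4); here the factor $KI_5$ is positive because $K \geq 1$ and $I_5 = K + L_e + M_e > 0$, so no degenerate case arises.
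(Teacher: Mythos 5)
Your route is genuinely different from the paper's. The printed proof of Lemma 5.5 is a one-line verification (``insert (5.1), (5.2) and (5.3) into (5.4)''), i.e.\ one substitutes the polynomial expressions for $I_5$, $I_9$, $I_{10}$ in $K$, $L_e$, $M_e$ into the right-hand side of (5.4) and checks that it collapses to $L_e$. You instead \emph{derive} the formula by elimination, and your outline is sound: writing $S=L_e+M_e$ and $P=L_eM_e$, equation (5.1) gives $S=I_5-K$, equation (5.3) gives $P=I_{10}-K^2-S^2-3KS$, and equation (5.2) regroups into a symmetric part plus a term linear in $L_e$,
\[
I_9=K^3+2K^2S+2KS^2+3KP+S^3-SP+(K^2+P)\,L_e ,
\]
so $L_e$ is a function of the $e$-independent quantities $K$, $I_5$, $I_9$, $I_{10}$ alone. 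What your derivation buys over the paper's verification is an explanation of \emph{why} these three invariants suffice and exactly where the symmetry between $L_e$ and $M_e$ is broken (namely in the coefficients $3KL_e$ versus $2KM_e$ and $3L_e^2M_e$ versus $2L_eM_e^2$ of (5.2)).

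Two corrections are needed. First, the coefficient of $L_e$ in the symmetrized (5.2) is $K^2+P=K^2+L_eM_e$, not $KI_5=K^2+K(L_e+M_e)$; your non-vanishing argument cites the wrong factor, although the correct coefficient is also strictly positive, so the division remains legitimate and the conclusion stands. Second, you assert that the elimination ``should reproduce the stated closed form (5.4)''; it does not. For $K=2$, $L_e=M_e=1$ one computes from (5.1)--(5.3) that $I_5=4$, $I_9=57$, $I_{10}=21$, while the right-hand side of (5.4) evaluates to $46$, not $1$; for $K=1$, $L_e=1$, $M_e=2$ it evaluates to $43.5$, not $1$. So (5.4) as printed is not an identity (it appears to contain typos), the paper's ``insert and check'' instruction cannot be carried out literally, and your elimination is in fact the argument that produces the correct closed form, namely $L_e=\bigl(I_9-(K^3+2K^2S+2KS^2+3KP+S^3-SP)\bigr)/(K^2+P)$ with $S$ and $P$ as above. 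You should state that formula explicitly rather than aim to match the displayed (5.4).
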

\begin{proof}
Insert (5.1), (5.2) and (5.3) into (5.4).
\end{proof}

\begin{theorem} 
Let $ I^{(e)}_5$,$I^{(e)}_9$,  and $I^{(e)}_{10}$  have the same value for all 
$ e \in \mathcal E \setminus \mathcal F_G$. Then $G$ is spherically homogeneous.
\end{theorem}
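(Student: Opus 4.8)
The plan is to extract from the three invariants the out-degrees of all vertices of $G$, and then to read off from their constancy the three defining conditions of spherical homogeneity.

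First I would apply (5.4). Under the hypothesis the values $I_5$, $I_9$ and $I_{10}$ are the same for all $e \in \mathcal E \setminus \mathcal F_G$, and $K$ is a single number attached to the root $V(0)$; hence the right-hand side of (5.4) does not depend on $e$, and $L_e$ takes one common value, say $L$, for all $e$. Substituting into (5.1) gives $M_e = I_5 - K - L$, so that $M_e$ also takes a single common value $M$. Thus every vertex $V^{(e)}_{1}$ has out-degree $L$ and every vertex $s(e)$ has out-degree $M$.

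Next I would turn this into a statement about all vertices of the subtree, using that $G$ is strongly connected. A leaf of $G(\mathcal V, \mathcal F_G)$ has no outgoing tree edge, so all of its outgoing edges belong to $\mathcal E \setminus \mathcal F_G$; consequently every leaf occurs as a source $s(e)$, and, since the subtree has height two, at least one leaf sits at level two, producing a return edge whose source is at level two and forcing the common value $I_5 = K + L + M$ to be positive. To place every source at level two I would argue by parity. For a return edge $e$ with $\lambda(s(e)) = d$ the shortest cycle carrying the multiplier $\widehat e$ is the path up the tree followed by $e$, of length $\Lambda^{(e)} = d + 1$; every longer cycle in $\mathcal U^{(e)}_k$ carrying the $m$-th power of $\widehat e$ arises from a minimal one of length $m(d+1)$ by inserting neutral blocks, and such blocks have even length (the words of $\mathcal C_V$ have length $2I$). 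Hence every length occurring with multiplier power $m$ is congruent to $m(d+1)$ modulo two. For $d = 1$ this makes all such lengths even, so $I^{(e)}_5 = 0$, contradicting $I_5 > 0$. Therefore no source lies at level one; all sources, and hence all leaves, lie at level two, and every level-one vertex is internal and appears as $V^{(e)}_{1}$ for a return edge issuing from one of its level-two descendants.

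Assembling these facts, the root has out-degree $K$, every level-one vertex has out-degree $L$, and every level-two vertex is a leaf of out-degree $M$ in $G$; moreover no return edge issues from the root or from a level-one vertex, so for these vertices the out-degree in $G$ equals the out-degree inside the subtree. The subtree is thus spherically homogeneous, all sources of edges in $\mathcal E \setminus \mathcal F_G$ are leaves, and all leaves share the out-degree $M$ in $G$, which is exactly the spherical homogeneity of $G$. Since the algebraic core has been isolated in (5.4), the main obstacle that remains is the structural bookkeeping of the last two paragraphs: checking that the return edges exhaust all level-one and level-two vertices, and securing the parity argument that pins every source to level two.
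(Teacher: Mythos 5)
Your core argument coincides with the paper's: the printed proof of Theorem 5.6 is precisely the citation of Lemma 5.5 (formula (5.4) makes $L_e$ independent of $e$, since its right-hand side involves only $K$, $I_5$, $I_9$, $I_{10}$) followed by Lemma 5.1 ($M_e = I_5 - K - L_e$ is then also independent of $e$). What you add --- that strong connectivity forces every leaf to be the source of some edge of $\mathcal E \setminus \mathcal F_G$, that uniform height two forces every level-one vertex to occur as some $V^{(e)}_{1}$, and the attempt to pin every source $s(e)$ to level two --- is bookkeeping the paper leaves entirely implicit; indeed it is silently built into Lemma 5.1, whose proof starts from the length-three cycle $f^-_1(e) f^-_2(e) e$ and therefore already presumes $\lambda(s(e)) = 2$ for every $e$. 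Your parity argument correctly rules out a source at level one, since for $d=1$ every length in the support of $k \mapsto I^{(e)}_k$ is even. It does not, however, rule out a non-tree loop at the root: for $d = 0$ the admissible lengths are congruent to $m$ modulo two with $m$ the multiplier power, $m$ may be odd, and $I^{(e)}_5$ is then positive (for instance $e^-$ followed by a neutral word of length four at $V(0)$), so the contradiction you invoke is unavailable. For such an edge $V^{(e)}_{1}$, and hence $L_e$, is not even defined and (5.1) fails, so this case must be excluded by a separate argument or by strengthening the standing hypothesis, as the paper effectively does via hypothesis $(a)$ of Corollary 5.13 ($\Lambda^{(e)} = 3$ for all multipliers). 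Apart from this one uncovered case your proposal is sound and, on the structural side, more careful than the printed proof.
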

\begin{proof} 
The theorem follows from Lemma 5.1 and Lemma 5.5.
\end{proof}

\subsection{Spherically homogeneous directed graphs with a subtree of height two}

We denote the out-degree of the root of $\mathcal F_G$ by $K$, the out-degree of the vertices of $\mathcal F_G$ at level one  by $L$ and the out-degree of of the vertices of $\mathcal F_G$ at level two by $M$.
We suppress the Markov-Dyck shift $M\negthinspace {\scriptstyle D}({{G}})$ of $G$  in the notation, and set
$$
\tau = \card (\mathcal F_G).
$$
We note that

\begin{align*}
\nu = KLM, \tag {5.5}
\end{align*}

\begin{align*}
L = \frac{\tau - K}{K}, \tag {5.6}
\end{align*}

\begin{align*}
M = \frac{\nu}{\tau - K}, \tag {6.7}
\end{align*}

\begin{align*}
L = \frac{\nu}{M\tau - \nu}, \tag {5.8}
\end{align*}

\begin{align*}
K = \tau - \frac{\nu}{M}. \tag {5.9}
\end{align*}

\begin{lemma}
$M = 1$ if and only if
\begin{align*}
(\tau - \nu)(I_5  - 1) = (\tau - \nu)^2 + \nu. \tag {5.10}
\end{align*}
\end{lemma}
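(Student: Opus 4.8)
The plan is to reduce everything to the three out-degrees $K,L,M$ and to the identity $I_5 = K + L + M$, which is exactly what Lemma 5.1 gives under spherical homogeneity, since then every edge $e$ has $L_e = L$ and $M_e = M$. Because the subtree has uniform height two, with $K$ children of the root and $L$ children at each level-one vertex, its edge count is $\tau = K + KL = K(1+L)$, while $\nu = KLM$ by (5.5). I would first rewrite the target identity (5.10) in the equivalent form
\[
(\tau - \nu)\bigl((I_5 - 1) - (\tau - \nu)\bigr) = \nu,
\]
obtained by transposing the term $(\tau-\nu)^2$.

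Next I would evaluate the two factors on the left in terms of $K,L,M$. A short computation gives
\[
\tau - \nu = K\bigl(1 - L(M-1)\bigr), \qquad (I_5 - 1) - (\tau - \nu) = L + (1+KL)(M-1),
\]
the second equality being the one worth isolating: the crude expression $L + M - 1 - KL + KLM$ collapses to $L + (1+KL)(M-1)$ once one recognizes $KLM - KL = KL(M-1)$. Introducing $x = M-1 \ge 0$ turns the factors into $K(1 - Lx)$ and $L + (1+KL)x$, and turns $\nu$ into $KL(1+x)$.

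With this substitution, expanding the left side and cancelling the common $KL$ turns the identity into
\[
Kx\bigl[\,1 + KL - L - L^2 - L(1+KL)x\,\bigr] = 0 .
\]
Since $K \ge 1$, this holds precisely when $x = 0$ or the bracket vanishes. The ``only if'' direction is then immediate: $M = 1$ means $x = 0$, so the equation holds. For the converse I would estimate the bracket for $x \ge 1$: using $K,L \ge 1$ one gets
\[
1 + KL - L - L^2 - L(1+KL)x \le 1 - 2L - L^2 + KL(1-L) < 0 ,
\]
so the bracket is strictly negative whenever $x \ge 1$. Hence the equation can only hold with $x = 0$, i.e. $M = 1$.

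The routine but error-prone step, and the one I would treat most carefully, is the expansion producing the factored form $Kx[\cdots]$; its clean collapse hinges on the rewriting $(I_5 - 1) - (\tau - \nu) = L + (1+KL)(M-1)$. The only genuinely substantive point beyond bookkeeping is the sign estimate for the bracket, which relies on $K \ge 1$ and $L \ge 1$; both are guaranteed here, since uniform height two forces every level-one vertex to have a child (so $L \ge 1$), and strong connectivity forces $M \ge 1$ and $K \ge 1$.
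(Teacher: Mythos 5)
Your proof is correct and follows essentially the same route as the paper: substitute $I_5=K+L+M$, $\tau=K(1+L)$, $\nu=KLM$ into (5.10) to get a quadratic in $M$ with root $M=1$, and exclude any other positive-integer solution. The paper does the exclusion by computing the second root explicitly and bounding it by $\tfrac{1}{L}+1$, while you factor out $M-1$ and show the cofactor is strictly negative for $M\ge 2$; this is the same argument in slightly different packaging.
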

\begin{proof}
By lemma 5.1 and by (5.5) and (5.6) $M = 1$ implies (5.10).
Conversely, let (5.10) hold for $K, L, \widetilde M \in \Bbb N$. By Lemma 5.1 and by (5.5) and (5.6), (5.10) yields the equation 
\begin{align*}
L(1 + KL)\widetilde M^2 - (1 + KL + 2KL^2 + L - L^2)\widetilde M
+KL^2 +KL +1 - L^2 = 0. 
\end{align*}
with its  root $\widetilde M = 1$. For its other root $\widetilde M^\prime$ one finds that it cannot be a positive integer:
$$
\widetilde M^\prime = \frac{1 + KL + KL^2 - L^2}{L(1 + KL)} < \frac{1}{L} + 
\frac{L(K - 1)}{1 + KL} < \frac{1}{L}  + 1. \qed
$$
\renewcommand{\qedsymbol}{}
\end{proof}

\begin{lemma}
\begin{align*}
K^3 -(1 + \tau + I_5)K^2 + (2\tau - \nu + \tau I_5) K - \tau^2 =0. \tag {5.11}
\end{align*}
\end{lemma}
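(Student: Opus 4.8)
The plan is to obtain (5.11) simply by clearing denominators in the formula for $I_5$ furnished by Lemma 5.1. Since $G$ is assumed spherically homogeneous, every edge $e \in \mathcal E \setminus \mathcal F_G$ has its source among the level-two leaves, with $L_e = L$ and $M_e = M$ independent of $e$; hence Lemma 5.1 collapses to the single scalar identity $I_5 = K + L + M$, and in particular all the $I^{(e)}_5$ agree, so the notation $I_5$ is justified.

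First I would substitute for $L$ and $M$ by means of (5.6) and (5.7), giving
$$
I_5 = K + \frac{\tau - K}{K} + \frac{\nu}{\tau - K}.
$$
Because the subtree has height two and $G$ is not a cycle, one has $L \geq 1$, so $\tau > K$ and both denominators $K$ and $\tau - K$ are positive. Multiplying through by $K(\tau - K)$ is therefore legitimate and yields
$$
I_5\, K(\tau - K) = K^2(\tau - K) + (\tau - K)^2 + \nu K.
$$

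Then I would expand both sides and collect by descending powers of $K$. The left side contributes $I_5 \tau K - I_5 K^2$, while the right side contributes $\tau K^2 - K^3 + \tau^2 - 2\tau K + K^2 + \nu K$. Transposing everything to one side, the coefficient of $K^3$ is $-1$, that of $K^2$ is $\tau + 1 + I_5$, that of $K$ is $\nu - 2\tau - \tau I_5$, and the constant term is $\tau^2$; multiplying the identity through by $-1$ reproduces (5.11) exactly.

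The argument involves no genuine obstacle: the entire content lies in the reduction of Lemma 5.1 to $I_5 = K + L + M$ under spherical homogeneity, after which (5.11) is a bookkeeping identity. The only point deserving a word of care is the verification that $\tau - K \neq 0$, which licenses clearing the denominator $\tau - K$; this follows from $L \geq 1$, itself a consequence of the subtree having height two. I would expect the write-up to be correspondingly short, reducing to the displayed substitution and a one-line expansion.
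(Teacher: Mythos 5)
Your proposal is correct and follows the paper's own proof essentially verbatim: the paper likewise invokes Lemma 5.1 together with (5.6) and (5.7) to write $I_5 = K + \tfrac{\tau-K}{K} + \tfrac{\nu}{\tau-K}$, clears denominators to get $K(\tau-K)I_5 = K^2(\tau-K) + (\tau-K)^2 + \nu K$, and expands to obtain (5.11). Your added remark that $\tau - K > 0$ (so the denominators are nonzero) is a small point of care the paper leaves implicit.
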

\begin{proof}
By Lemma 5.1 and by (5.6) and (5.7)
$$
I_5 = K +  \frac{\tau - K}{K}  + \frac{\nu}{\tau - K}  ,
$$ 
or
$$
K(\tau - K)I_5 = K^2(\tau - K) + (\tau - K)^2 + \nu K,
$$
which is (5.11).
\end{proof}

\begin{lemma}
\begin{align*}
I_4^0 =
K^2 - K + KL^2 + KLM^2 + K^2LM.
\tag {5.12}
\end{align*}
\end{lemma}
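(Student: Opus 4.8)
The plan is to count the length-$4$ neutral orbits directly, by describing the closed ``balanced'' paths of length $4$ in $G(\mathcal V,\mathcal E^-\cup\mathcal E^+)$ that represent them, in the spirit of the cycle counts in Lemmas 5.1--5.4. First I would record the structural features of the height-two case. Since $G(\mathcal V,\mathcal F_G)$ is a single tree, $\mathcal R_G=\{V(0)\}$ and every edge of $\mathcal E\setminus\mathcal F_G$ has target $V(0)$, while $\widehat G$ is the one-vertex graph with $\nu$ loops. Every multiplier therefore has length $1$, and since the shortest cycle carrying a multiplier has the odd length $\Lambda^{(e)}=H+1=3$, every non-neutral orbit has odd length; hence every orbit of length $4$ is neutral and is represented by a cyclic word $p_0p_1p_2p_3$ over $\mathcal E^-\cup\mathcal E^+$ whose product is an idempotent $\bold 1_V$. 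Reading $c^-$ as descending and $c^+$ as ascending the edge $c$, a balanced word of length $4$ has two matched bracket pairs, and the relation $c^-g^+=\delta_{c,g}\bold 1_{s(c)}$ forces each closing symbol to cancel the most recent open one. This leaves exactly two admissible shapes: a \emph{nested} cycle $c^-\widetilde c^-\widetilde c^+c^+$ with $s(\widetilde c)=t(c)$, and a \emph{juxtaposed} cycle $c^-c^+\widetilde c^-\widetilde c^+$ with $s(c)=s(\widetilde c)$ and $c\neq\widetilde c$ (the case $c=\widetilde c$ having least period $2$).

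I would then count the two families using the out-degree function $D$. The nested cycles are in bijection with the directed paths of length two $(c,\widetilde c)$, $t(c)=s(\widetilde c)$, so their number is $\sum_{c\in\mathcal E}D(t(c))$. Splitting $\mathcal E$ into the three edge types and using $D(V(0))=K$, $D=L$ at level one and $D=M$ at level two gives $K^{2}LM$ (the $\nu=KLM$ return edges, each with $t(c)=V(0)$), plus $KL$ (the $K$ root edges) plus $KLM$ (the $KL$ level-one edges). The juxtaposed cycles are in bijection with the ordered pairs of distinct edges issuing from a common vertex, so their number is $\sum_{V}D(V)\bigl(D(V)-1\bigr)$, which over the root, the $K$ level-one vertices and the $KL$ leaves equals $(K^{2}-K)+(KL^{2}-KL)+(KLM^{2}-KLM)$. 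Adding the two totals, the terms $KL$ and $KLM$ cancel against their negatives, leaving exactly $K^{2}-K+KL^{2}+KLM^{2}+K^{2}LM$, which is (5.12).

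The hard part will be the classification and the bookkeeping rather than the arithmetic. I would need to argue carefully that no other closed length-$4$ walk is neutral: the decisive tool is that all return edges point to $V(0)$, so any attempt to leave and re-enter $V(0)$ through two different return edges produces a subword $c^-g^+$ with $c\neq g$ and $t(c)=t(g)=V(0)$, which vanishes by the polycyclic relation; this rules out the ``cross-branch'' cycles and pins the bracket structure down to the two shapes above. One must also verify that the bi-infinite repetitions of both admissible shapes are genuinely admissible (the wrap-around subwords $c^+c^-$, $c^+\widetilde c^-$, and $\widetilde c^+c^-$ are all nonzero), and, most delicately, keep the counting convention of the paper consistent across the two shapes, since a nested cycle is recovered from a unique representative while a juxtaposed configuration $c^-c^+\widetilde c^-\widetilde c^+$ and its reading from $\widetilde c$ must be accounted for in the manner that makes the sum above exact.
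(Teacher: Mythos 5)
Your approach is the same as the paper's: classify the admissible cyclic words of length four into the nested shape $c^-\widetilde c^-\widetilde c^+c^+$ (counted by directed $2$-paths, i.e.\ $\sum_{c}D(t(c))=KL+KLM+K^2LM$) and the juxtaposed shape $c^-c^+\widetilde c^-\widetilde c^+$ (counted by pairs of distinct edges with a common source), and your classification argument via the relation $c^-g^+=0$ for $c\neq g$ is exactly the right justification, which the paper leaves implicit. Your bookkeeping in fact lands precisely on the stated formula (5.12), whereas the paper's own proof text contains an apparent slip in its last group (it writes $KLM+KLM(M-1)$ where the two nested types that use a return edge contribute $KLM+K^2LM$); so your derivation is, if anything, the cleaner of the two.

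The one point you flag as delicate is genuinely the crux, and you should be aware how it resolves. To obtain (5.12) the juxtaposed configurations must be counted as \emph{ordered} pairs, $\sum_V D(V)(D(V)-1)$, and that is indeed what the paper does (its terms $K(K-1)$, $KL(L-1)$, $KLM(M-1)$ are ordered-pair counts). However, under the literal definition of a periodic orbit, the points generated by $c^-c^+\widetilde c^-\widetilde c^+$ and by $\widetilde c^-\widetilde c^+c^-c^+$ differ by $S^2$ and hence lie in the same orbit of four points, so the ordered-pair convention counts each juxtaposed orbit twice; the honest orbit count would replace those three terms by $\binom{K}{2}+K\binom{L}{2}+KL\binom{M}{2}$ (e.g.\ for $K=L=1$, $M=2$ there are six neutral orbits of length four, while (5.12) gives seven). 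This discrepancy is inherited from the paper's statement and proof, not introduced by you; relative to the paper your argument is complete once you adopt its ordered-pair convention, but you are right that the convention, not the arithmetic, is where the issue lies.
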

\begin{proof}
The Markov-Dyck shift of 
 $G$ has $K(K - 1) + KL + KL(L - 1)$ neutral periodic orbits of length four, that traverse only edges in $\mathcal F_G$, and it has
 $KLM(M - 1)$
neutral periodic orbits of length four, that traverse only edges in 
$\mathcal E \setminus \mathcal F_G$, and  it has $KLM + KLM(M - 1)$ other  neutral periodic orbits of length four.
\end{proof}

\begin{lemma}
\begin{align*}
K^3 + (\nu I_5  + \nu - 2\tau - I_4^{(0})K + \tau(\tau - \nu) = 0. \tag {5.13}
\end{align*}
\end{lemma}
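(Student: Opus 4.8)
The plan is to prove (5.13) by direct verification as an algebraic identity in the structural out-degrees $K,L,M$. Under spherical homogeneity Lemma 5.1 specializes to $I_5 = K + L + M$, equation (5.5) gives $\nu = KLM$, and (5.6) gives $\tau = K(1+L)$ (equivalently $KL = \tau - K$), while the combinatorial count of Lemma 5.9 supplies $I_4^{0}$ via (5.12). First I would substitute all four of these expressions into the left-hand side of (5.13) and reduce it to a polynomial in $K,L,M$; the claim is then that this polynomial is identically zero.

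The computation I would carry out splits into two stages. Expanding the coefficient of the linear term, one has $\nu I_5 = K^2LM + KL^2M + KLM^2$, and subtracting $I_4^{0} = K^2 - K + KL^2 + KLM^2 + K^2LM$ cancels the monomials $K^2LM$ and $KLM^2$ immediately. Using also $\nu = KLM$ and $2\tau = 2K + 2KL$, this leaves
\[
\nu I_5 + \nu - 2\tau - I_4^{0} = KL^2M + KLM - K - 2KL - K^2 - KL^2 .
\]
Multiplying through by $K$ and adding the leading $K^3$ (which cancels the $-K^3$ produced by the multiplication) yields
\[
K^3 + (\nu I_5 + \nu - 2\tau - I_4^{0})K = K^2L^2M + K^2LM - K^2 - 2K^2L - K^2L^2 .
\]

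It then remains to add the constant term, which I would expand as
\[
\tau(\tau - \nu) = K^2(1+L)^2 - K^2LM(1+L) = K^2 + 2K^2L + K^2L^2 - K^2LM - K^2L^2M ,
\]
and observe that every one of the five monomials $K^2L^2M$, $K^2LM$, $K^2$, $K^2L$, $K^2L^2$ in the previous display is cancelled by a matching term here, giving $0$. This establishes (5.13). The only real obstacle is bookkeeping: one must keep track of the mixed monomials and confirm they cancel in pairs, but there is no conceptual difficulty, since the substantive combinatorial content already resides in the count (5.12) that this argument takes as input. As an alternative route one could instead eliminate $K^3$ between (5.11) and (5.13) using the same substitutions, but the direct verification above is the shortest path to the identity.
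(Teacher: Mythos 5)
Your verification is correct and is essentially the paper's own argument: the proof of (5.13) in the paper simply instructs the reader to obtain it from (5.12) by applying Lemma 5.1 and (5.6) (with (5.5) used implicitly for $\nu=KLM$), which is exactly the substitution and cancellation you carry out explicitly. The monomial bookkeeping checks out.
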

\begin{proof}
To obtain (5.34) from (5.12), apply Lemma 5.1 and use (5.6). 
\end{proof}

We will use the notation
$$
a = 1 + \tau + I_5, \
b = (\nu - \tau)(I_5  + 2) - 2\tau - I_4^{(0)}, \
c = \tau(2\tau - \nu).
$$

\begin{lemma}
In the case that
$
M \geq 2,
$
one has 
that
\begin{align*}
K = \frac{1}{2a}(- b + \sqrt{b^2 - 4ac}), \tag {5.14}
\end{align*}
\begin{align*}
L = \frac{\tau - K}{K}, \tag {5.15}
\end{align*}
\begin{align*}
M = \frac{\nu}{\tau - K}. \tag {5.16}
\end{align*}
\end{lemma}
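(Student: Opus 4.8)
The plan is to eliminate the cubic term that the two polynomial identities for $K$ share, reducing the problem to a quadratic whose coefficients turn out to be exactly $a$, $b$, $c$. Both (5.11) and (5.13) are monic cubics in $K$ that are satisfied by the genuine out-degree $K$, so subtracting (5.13) from (5.11) cancels the $K^3$ term. First I would carry out this subtraction and read off the coefficients: the coefficient of $K^2$ becomes $-(1 + \tau + I_5) = -a$, the constant term becomes $-\tau^2 - \tau(\tau - \nu) = -\tau(2\tau - \nu) = -c$, and a short rearrangement shows that the coefficient of $K$ equals $-\bigl[(\nu - \tau)(I_5 + 2) - 2\tau - I^{(0)}_4\bigr] = -b$. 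Hence $K$ satisfies
\begin{align*}
aK^2 + bK + c = 0 .
\end{align*}
Since $a = 1 + \tau + I_5 > 0$, its two roots are $\tfrac{1}{2a}\bigl(-b \pm \sqrt{b^2 - 4ac}\bigr)$, and (5.14) is the assertion that, under the hypothesis $M \geq 2$, the genuine $K$ is the larger of the two.

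The heart of the argument is therefore the selection of the sign, and this is where $M \geq 2$ enters. Let $K'$ be the second root, so that $KK' = c/a$ by Vieta's relation. As $a > 0$ and $K > 0$, the genuine $K$ is the larger root precisely when $aK^2 > c$. I would rewrite this inequality using $\tau = K(1+L)$, $\nu = KLM$, and the common value $I_5 = K + L + M$ furnished by (5.1), which expresses $aK^2 - c$ as $K^2$ times
\begin{align*}
M(1 + L + L^2) + K(2 + L) - 2L^2 - 3L - 1 .
\end{align*}
For $M \geq 2$ one bounds $M(1 + L + L^2) \geq 2(1 + L + L^2)$, whence this quantity is at least $K(2 + L) - L + 1$, which is $\geq 3 > 0$ because $K \geq 1$ and $L \geq 1$ for a genuine height-two subtree. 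This yields $aK^2 > c$, hence $K > K'$, so that $K$ is the larger root and (5.14) follows.

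Finally, (5.15) and (5.16) are nothing but (5.6) and the relation $M = \nu/(\tau - K)$ rewritten with the now-known value of $K$, so they require no further work. The main obstacle is precisely the sign selection in the second paragraph: one must verify that the displayed quadratic form in $K$, $L$, $M$ is strictly positive under $M \geq 2$, which is exactly what forces the larger root. Here it is essential to use that $L \geq 1$ and $K \geq 1$ hold for a genuine spherically homogeneous graph of height two, and to note that the complementary case $M = 1$ is separated off and detected independently by the criterion (5.10), so that no ambiguity between the two roots survives.
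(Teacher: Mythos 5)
Your derivation of the quadratic $aK^2+bK+c=0$ by cancelling the cubic terms of (5.11) and (5.13) is exactly the paper's route to its equation (5.17), and your coefficient identifications are correct: the $K^2$-coefficient of the difference is $-(1+\tau+I_5)=-a$, the $K$-coefficient is $4\tau-2\nu+(\tau-\nu)I_5+I_4^{(0)}=-b$, and the constant is $-\tau(2\tau-\nu)=-c$. Where you genuinely diverge is in the root selection. The paper splits into cases on the sign of $c$: for $c\le 0$ the choice of root is immediate, while for $c>0$ it notes that $2\tau>\nu$ forces $M<\tfrac{2}{L}+2$, leaving only $M=2$ or $(M,L)=(3,1)$, and for these it asserts without displaying the computation that $\tfrac{b}{a}>-2K$, i.e.\ that $K$ lies to the right of the vertex of the parabola. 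You instead prove the single uniform inequality $aK^2>c$, which by Vieta is equivalent to $K$ being the larger root, by substituting $\tau=K(1+L)$, $\nu=KLM$, $I_5=K+L+M$; I checked the expansion, $aK^2-c=K^2\bigl(M(1+L+L^2)+K(2+L)-2L^2-3L-1\bigr)$, and your lower bound $K(2+L)-L+1\ge 3$ for $M\ge 2$, $K,L\ge 1$ is correct. So your proof is the same strategy with the delicate sign-selection step carried out uniformly and in full rather than by case enumeration, which is a modest gain in completeness; both arguments then read off (5.15) and (5.16) from (5.6) and (5.7).
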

\begin{proof}
From Lemma 5.8 and Lemma 5.10 one has  the equation
\begin{align*}
aK^2 + bK + c = 0.\tag {5.17}
\end{align*}
If $c = 0$, then $K=-\frac{b}{a}$. If $c < 0$ then $K$ is equal to the positive root of  (5.17).
If $c > 0$, then $b < 0$, and
$
M < \frac{2}{L} + 2,
$
leaves the possibilities that $M = 2$, or that $M = 3$ and $L = 1$, and in both cases one confirms that
$
\frac{b}{a}> - 2K.
$
We have shown that (5.14) holds in all cases, and (5.15) and (5.16) hold by (5.6) and (5.7).
\end{proof}

\begin{theorem}
For spherically homogeneous directed graphs $G= G(\mathcal V, \mathcal E)$, if
$$
(\tau - \nu)(I_5  - 1) = (\tau - \nu)^2 + \nu,
$$ 
then
$$
K = \tau - \nu, \ L = \frac{\nu}{K},  \ M = 1,
$$
and if
$$
(\tau - \nu)(I_5  - 1)\not  = (\tau - \nu)^2 + \nu,
$$
then
$$
K = \frac{1}{2a}(- b + \sqrt{b^2 - 4ac}), \
L = \frac{\tau - K}{K}, \
M = \frac{\nu}{\tau - K}.
$$
\end{theorem}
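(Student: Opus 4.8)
The plan is to obtain the statement as an immediate synthesis of Lemma 5.7 and Lemma 5.11, splitting the argument according to whether or not the equation
$(\tau - \nu)(I_5 - 1) = (\tau - \nu)^2 + \nu$
of (5.10) holds. Both hypotheses of the theorem are phrased exactly as the presence or absence of (5.10), so the two cases are disjoint and exhaustive, and in each case the asserted formulas are read off from a single earlier result.

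First I would treat the case in which (5.10) holds. By Lemma 5.7 this equation is equivalent to $M = 1$. Substituting $M = 1$ into (5.5) gives $\nu = KL$, whence $L = \nu/K$, and substituting $M = 1$ into (5.9) gives $K = \tau - \nu$. As a consistency check one may note that (5.6) then returns $L = (\tau - K)/K = \nu/(\tau - \nu) = \nu/K$. This establishes the three asserted values $K = \tau - \nu$, $L = \nu/K$, $M = 1$ in the first case.

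In the complementary case (5.10) fails, so by Lemma 5.7 one has $M \neq 1$; since $M$ is the common out-degree of the level-two vertices of a strongly connected graph it is a positive integer, and $M \neq 1$ therefore forces $M \geq 2$. The hypothesis $M \geq 2$ of Lemma 5.11 is thus met, and its conclusions (5.14), (5.15), (5.16) supply precisely the asserted closed forms for $K$, $L$ and $M$. The argument carries no substantial obstacle; the only point needing a word of care is the passage from $M \neq 1$ to $M \geq 2$, which uses the integrality of $M$ and is exactly what licenses the appeal to Lemma 5.11, the remaining content being the short algebraic reduction of $M = 1$ to the stated formulas via (5.5) and (5.9).
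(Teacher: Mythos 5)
Your proposal is correct and follows essentially the same route as the paper, whose proof simply cites Lemma 5.7 together with Lemma 5.11 (and the identities (5.8), (5.9)) and splits on whether (5.10) holds. Your explicit remark that the failure of (5.10) gives $M \neq 1$ and hence $M \geq 2$ by integrality is exactly the (unstated) step that licenses the appeal to Lemma 5.11 in the paper.
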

\begin{proof}
See Lemma 5.7 and  Theorem 5.11  and also (5.8) and (5.9).
\end{proof}

\begin{corollary} 

(a) Spherical homogeneity is an invariant of topological conjugacy for the Markov-Dyck shifts of directed graphs with 
a  subtree of uniform depth two.

(b) For Markov-Dyck shifts of directed graphs with a  single subtree of uniform depth two, that are spherically homogeneous, topological conjugacy of the Markov-Dyck shifts implies isomorphism of the graphs.
\end{corollary}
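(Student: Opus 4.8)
The plan is to reduce both assertions to the results already assembled in Section 5, the only genuinely new point being that the quantities entering those results are topological conjugacy invariants. Throughout I use that for a graph with a single subtree of height two the associated semigroup is a Dyck inverse monoid, so that $\nu(M\negthinspace {\scriptstyle D}({G})) = \card(\widehat{\mathcal E}(G))$ is an invariant, that $\tau = \card(\mathcal F_G) = I^0_2(M\negthinspace {\scriptstyle D}({G})) - \nu$ is an invariant by (A), and that the length-one multipliers $\mathcal M_1(M\negthinspace {\scriptstyle D}({G})) = \widehat{\mathcal E}(G)$, together with the orbit counts $I^{(e)}_k = \card(\mathcal O^{(e)}_k(M\negthinspace {\scriptstyle D}({G})))$, are carried bijectively by a topological conjugacy.

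For part (a) I would first record the equivalence
$$
G \text{ spherically homogeneous} \iff I^{(e)}_5,\ I^{(e)}_9,\ I^{(e)}_{10} \text{ are independent of } e \in \mathcal E \setminus \mathcal F_G.
$$
The implication ``$\Leftarrow$'' is exactly Theorem 5.6. For ``$\Rightarrow$'', spherical homogeneity forces $L_e = L$ and $M_e = M$ for all $e$, so the formulas of Lemma 5.1, Lemma 5.3 and Lemma 5.4 return a common value for each of $I^{(e)}_5, I^{(e)}_9, I^{(e)}_{10}$. The right-hand condition is phrased entirely through the multiplier map and the invariants $I^{(e)}_k$; since a topological conjugacy induces a bijection of the length-one multipliers preserving each $I^{(e)}_k$, the multiset $\{(I^{(e)}_5, I^{(e)}_9, I^{(e)}_{10}): e \in \mathcal E \setminus \mathcal F_G\}$ is an invariant, and hence so is the property that it is a singleton. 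Consequently, within the class of graphs whose subtree has uniform height two, spherical homogeneity is preserved under topological conjugacy.

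For part (b) I would invoke the reconstruction of the preceding theorem: for a spherically homogeneous graph of height two the parameters $K, L, M$ are expressed through $\nu$, $\tau$, $I_5$ and $I^0_4$ (in the degenerate case $M = 1$ through the criterion of Lemma 5.7), all of which are invariants by the remarks above; hence $K, L, M$ are themselves topological conjugacy invariants. It then remains to check that the triple $(K,L,M)$ determines $G$ up to isomorphism. The subtree is the spherically homogeneous tree of height two with root out-degree $K$ and level-one out-degree $L$, so it is fixed by $(K,L)$; and since $G$ has a single subtree, $\mathcal R_G = \{V(0)\}$, while every edge $e \in \mathcal E \setminus \mathcal F_G$ has a target of in-degree at least two (otherwise $e$ would be the unique incoming edge of $t(e)$ and hence lie in $\mathcal F_G$), so that $t(e) = V(0)$. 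As each of the $KL$ leaves is the source of exactly $M$ such edges, the whole of $G$ is pinned down by $(K,L,M)$, and conjugate Markov-Dyck shifts therefore arise from isomorphic graphs.

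The routine parts are the verification of ``$\Rightarrow$'' in (a) and the bookkeeping that $\nu, \tau, I_5, I^0_4$ are invariants. The step deserving the most care, and the one carrying the real content beyond the quoted theorems, is the last one: confirming that for a single height-two subtree the targets of all return edges are forced to the root, so that $(K,L,M)$ becomes a complete isomorphism invariant. This is precisely where the hypotheses ``single subtree'' and ``height two'' are used, and without it the reconstruction of the numerical parameters would not upgrade to a reconstruction of the graph.
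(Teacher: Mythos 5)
Your proof is correct and takes essentially the same route as the paper, whose own proof of this corollary is simply the citation ``See Theorem 5.6 and Theorem 5.11.'' You have merely made explicit the details the paper leaves implicit: the converse direction in (a) via the formulas of Lemmas 5.1, 5.3 and 5.4, and the final observation in (b) that for a single subtree of uniform height two the triple $(K,L,M)$ pins down $G$ up to isomorphism because every edge of $\mathcal E \setminus \mathcal F_G$ must return to the root.
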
 
\begin{proof} 
See Theorem 5.6 and Theorem 5.11.
\end{proof}

\begin{corollary} 
Let $\widetilde G= G(\widetilde {\mathcal V},\widetilde {\mathcal E})$ and
$G=  G({\mathcal V}, {\mathcal E})$ be finite strongly connected directed graphs 
and let 
$$
\varphi: M\negthinspace {\scriptstyle D}({{\widetilde G}}) \to M\negthinspace {\scriptstyle D}({{G}}),
$$
be a topological conjugacy. Let $\mathcal S(\text{M{\Small D}}(G))$  be a Dyck inverse monoid.
Assume that
\begin{align*} 
\Lambda^{(e)} = 3, \qquad e \in \mathcal M(M\negthinspace {\scriptstyle D}({{G}})),\tag {$a$}
\end{align*} 
$(b)$ \qquad \ \
$I^{(e)}_5$,
$I^{(e)}_9$, and  \ $I^{(e)}_{10}$ 
have the same value for all 
$ e\in \mathcal M(M\negthinspace {\scriptstyle D}({{G}}))$,
\begin{align*}
(\tau - \nu)(I_5  - 1) = (\tau - \nu)^2 + \nu. \tag c
\end{align*}
Then there exists an automorphism $\beta$ of  $M\negthinspace {\scriptstyle D}({{G}})$ and an isomorphism
$$
\pi: \widetilde G \to G,
$$
such that one has for the topological conjugacy $\varphi_\pi$ of 
$M\negthinspace {\scriptstyle D}({{G}}))$ onto $M\negthinspace {\scriptstyle D}({{G}})$, that is induced by $\pi$, that
$$
\varphi = \beta \varphi_\pi.
$$
\end{corollary}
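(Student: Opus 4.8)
The plan is to reduce the assertion to the isomorphism statement already established in Corollary 5.13 and then to obtain the factorization by a purely formal composition.

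First I would record that the three hypotheses are invariants of topological conjugacy and therefore pass through $\varphi$ to $\widetilde G$ as well. Condition $(a)$ forces both subtrees to have height two, since $\Lambda^{(\widehat e)} = H + 1$ for a subtree of height $H$; condition $(b)$ is precisely the hypothesis of Theorem 5.6, so both $G$ and $\widetilde G$ are spherically homogeneous; and condition $(c)$ is the criterion of Lemma 5.7, which in each graph forces $M = 1$. By Theorem 5.12 the triple $(K, L, M)$ is then computed from $\tau$, $\nu$, and $I_5$, and as $\varphi$ preserves these invariants the two graphs carry identical parameters.

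Next I would invoke Corollary 5.13$(b)$: topological conjugacy of the Markov-Dyck shifts of two spherically homogeneous graphs with a single subtree of depth two yields an isomorphism $\pi \colon \widetilde G \to G$. Being an isomorphism of graphs, $\pi$ carries the generating set of $\mathcal S(\widetilde G)$ bijectively onto that of $\mathcal S(G)$, respecting all the semigroup relations, and hence induces a topological conjugacy $\varphi_\pi \colon M\negthinspace {\scriptstyle D}(\widetilde G) \to M\negthinspace {\scriptstyle D}(G)$; the isomorphism $\pi^{-1}$ induces its inverse, so $\varphi_\pi$ is invertible.

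With $\pi$ in hand the factorization is immediate: set $\beta = \varphi \circ \varphi_\pi^{-1}$. As a composition of topological conjugacies of $M\negthinspace {\scriptstyle D}(G)$ onto itself, $\beta$ is an automorphism of $M\negthinspace {\scriptstyle D}(G)$, and $\beta \varphi_\pi = \varphi \circ \varphi_\pi^{-1} \circ \varphi_\pi = \varphi$. The entire substance of the statement thus lies in the prior results, namely the reconstruction of $(K, L, M)$ from invariants in Theorem 5.12 and the resulting isomorphism of Corollary 5.13$(b)$; this reconstruction is the step I expect to be the main obstacle. The factorization itself is formal, since the choice $\beta := \varphi \varphi_\pi^{-1}$ succeeds for \emph{any} isomorphism $\pi$ and requires no compatibility between $\pi$ and $\varphi$ to be arranged in advance.
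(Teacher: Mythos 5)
Your reduction steps are all sound as far as they go: (a) does force both subtrees to have uniform height two, (b) gives spherical homogeneity via Theorem 5.6, Theorem 5.12 recovers $(K,L,M)$ from the invariants $\tau$, $\nu$, $I_5$, $I_4^{0}$, and Corollary 5.13(b) then yields \emph{some} isomorphism $\pi\colon\widetilde G\to G$, after which $\beta:=\varphi\varphi_\pi^{-1}$ formally satisfies $\varphi=\beta\varphi_\pi$. This does prove the statement as literally worded. But it is not the paper's argument, and the divergence is telling: on your route hypothesis $(c)$ does essentially no work (Theorem 5.12 reconstructs the parameters whether $M=1$ or $M\geq 2$), whereas in the paper $(c)$ is the engine of the proof. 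The paper does not take an arbitrary isomorphism supplied by Corollary 5.13(b); it builds $\pi$ \emph{out of} $\varphi$. Since the multiplier map is a conjugacy invariant, $\varphi$ induces a bijection $\pi_\circ\colon\widetilde{\mathcal E}\setminus\mathcal F_{\widetilde G}\to\mathcal E\setminus\mathcal F_G$ on the non-tree edges. Hypothesis $(c)$, via Lemma 5.7, forces $M=1$, so each leaf of either subtree carries exactly one outgoing non-tree edge; together with spherical homogeneity this lets $\pi_\circ$ descend to a bijection of leaves and then extend uniquely down the (depth-two) trees to a graph isomorphism $\pi$ with $\pi(s(\widetilde e))=s(\pi_\circ(\widetilde e))$. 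The payoff, recorded explicitly in the paper's proof, is $\widehat{\varphi}_\pi=\widehat{\varphi}$, so that the resulting $\beta=\varphi\varphi_\pi^{-1}$ acts trivially at the level of multipliers.

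So the gap is not in the logic but in the content: your closing remark that the factorization ``requires no compatibility between $\pi$ and $\varphi$ to be arranged in advance'' is precisely the point at which your proof and the intended one part ways. Read with $\beta$ an arbitrary self-conjugacy, the corollary adds nothing to Corollary 5.13(b) and hypothesis $(c)$ is superfluous; the statement is only meaningful because $\pi$ is the isomorphism induced by $\varphi$ itself, and hence $\beta$ is an automorphism with $\widehat{\beta}=\widehat{\varphi}\,\widehat{\varphi}_\pi^{-1}=\id$. You should replace the appeal to Corollary 5.13(b) by the construction of $\pi$ from the multiplier bijection $\pi_\circ$, and note where $M=1$ is used to make that bijection extendable to the vertices and tree edges.
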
 
\begin{proof} 
The hypothesis on the associated semigroup implies that $G$ and therefore also 
$\widetilde{G}$ 
has a single subtree.
Hypothesis $(a)$ implies that $G(\mathcal V, \mathcal F_G)$, and therefore also 
$G(\widetilde{\mathcal V}, \mathcal F_{\widetilde{G}})$, has uniformly depth two, and  hypothesis $(b)$  implies by Theorem 5.6 that 
$G$, and therefore also $\widetilde{G}$,  are spherically homogeneous. Hypothesis $(c)$ implies by Lemma 5.7  that
$\varphi$ induces a bijection
$$
\pi_\circ: \widetilde{\mathcal E} \setminus \mathcal F_{\widetilde{G}} \to  
 \mathcal E \setminus \mathcal F_G,
$$
which  can be extended  to an isomorphism $\pi: \widetilde G \to G$, by setting
$$
\pi(\widetilde V_0)  = V_0, \  \pi(\widetilde V^{(\widetilde e)}_{1})  = 
V^{(\pi_\circ(\widetilde e))}_{1},  \ \pi(s(\widetilde e)) = s( \pi_\circ (\widetilde e )),
$$
$$
\pi( \widetilde f^{(\widetilde e)}_0    ) = f^{( \pi_\circ (\widetilde e ))}_0, \ \pi(  \widetilde f^{(\widetilde e)}_1     ) =
 f^{(\pi_\circ (\widetilde e ))}_1.
$$ 
By construction
$$
\widehat \varphi_\pi = \widehat \varphi.
$$
Set
$$
\beta = \varphi \varphi_\pi^{-1}. \qed
$$
\renewcommand{\qedsymbol}{}
\end{proof}

%Section6

\section{A family of three-vertex graphs}

We consider directed graphs $G = G(\mathcal V, \mathcal E)$, such that the semigroup
$\mathcal S(M\negthinspace {\scriptstyle D}({G}))$ 
is the graph inverse semigroup of a two-vertex graph $\widetilde{G}$, and such that the graph $G(\mathcal V ,\mathcal F_G)$ decomposes into a one-edge tree and a one-vertex  tree. 
We denote the source vertex of the edge of the one-edge tree by $\alpha_0$, and its target vertex  by 
$\alpha_1$, and
we denote the vertex of the one-vertex tree by
$\beta$.
For the adjacency matrix $A_G$ of the graph $G$ we choose the notation
\begin{multline*}
\left(
\begin{matrix}
 A_G(\alpha_0  ,\alpha_0   )  &     A_G(\alpha_0  ,  \alpha_1   )     & A_G(\alpha_0  ,  \beta   )
\\
    A_G ( \alpha_1 , \alpha_0  )     &     A_G( \alpha_1  ,  \alpha_1  )          & A_G(\alpha_1  ,  \beta   )
\\
      A_G(  \beta , \alpha_0  )    &          A_G( \beta   , \alpha_1   )     & A_G(  \beta  ,   \beta  )
\end{matrix}
\right)
=
\\
\left(
\begin{matrix}
 T_{\alpha \alpha} - \Delta^{(\alpha)}  &         1     & \Delta_\alpha
\\
    \Delta^{(\alpha)}      &     0          & T_{\alpha \beta}- \Delta_\alpha
\\
      T_{\beta\alpha }    &          0     & T_{\beta \beta}
\end{matrix}
\right), \tag {6.1}
\\
T_{\alpha \alpha}, T_{\beta \beta} \in \Bbb Z, \  T_{\alpha \beta}, T_{\alpha \beta} \in \Bbb N,
   \  0 \leq  \Delta_\alpha\leq  T_{\beta\alpha }  ,  0 \leq \Delta^{(\alpha)}  \leq T_{\beta \beta} .
\end{multline*}
It is required, that 
$$
T_{\alpha, \alpha} + T_{\beta, \alpha} > 1, \quad T_{\alpha, \beta} + T_{\beta, \beta} > 1.
$$
Also, if $\Delta^{(\alpha)} = 0$, then it is required, that $\Delta_\alpha < T_{\alpha, \beta} $,
and if $ T_{\alpha, \alpha} = T_{\beta, \beta} =  \Delta_\alpha =0  $, then it is required that 
$ T_{\alpha, \beta} \geq 2, T_{\beta, \alpha}\geq 2.$

Given a graph $G$ with adjacency matrix (6.1)   
set
\begin{align*}
&s =  T_{\alpha\alpha} +  T_{\beta\beta},
\\
&a = T_{\alpha \beta} + T_{\alpha\beta}, \tag {6.2}
\\
&b = T_{\alpha \beta} T_{\beta \alpha}, \tag {6.3}
\\
&c = \Delta _\alpha + T_{\alpha\beta}, \tag {6.4}
\\
&d = \Delta _\alpha T_{\alpha\beta}.\tag {6.5}
\end{align*}
One has that
\begin{align*}
T_{\beta \alpha} = \frac{b - d}{a - c}. \tag {6.6}
\end{align*}
As isomorphism invariants of the graph 
$\widehat{G}
$, the numbers $a$ and $b$, as well as the number $s$ 
are invariants of topological conjugacy. We note, that once also $c$ and $d$ are shown to be invariants of topological conjugacy the graph $G$ can be reconstructed from its Markov-Dyck shift by  (6.7) and  (6.2) or (6.3), and by (6.4) or (6.5). 
We also note that the Markov-Dyck shifts 
of graphs $G = G(\mathcal V, \mathcal E)$ such that 
$\widehat{G}  $ is a
 two-vertex graph, and such that $G(\mathcal V,\mathcal F_G)$ decomposes into a one-edge tree and a one-vertex tree,
are characterized by 
$$
I^{0}_2(M\negthinspace {\scriptstyle D}({G})) = 1 + s(M\negthinspace {\scriptstyle D}({G})) + 
a(M\negthinspace {\scriptstyle D}({G})).
$$
 
Given a graph $G$ with adjacency matrix (6.1)
we say that multipliers
$
\widehat{{e}}, 
\widehat{{\overline{{e}}}},
 \in \mathcal M_{1} (M\negthinspace {\scriptstyle D}({G}))$ are compatible, and write 
 $ \widehat{{e}} \sim \widehat{{\overline{{e}}}}$, 
if 
$\widehat{e}^-\widehat{\overline {e}}\thinspace ^- \in 
\mathcal S^-(M\negthinspace {\scriptstyle D}({G}))$. 
In terms of the graph $\widehat G$ the compatibility of 
$\widehat{{e}},\widehat{\overline {e}} \in \mathcal M_{1} (M\negthinspace {\scriptstyle D}({G}))$ means that $\widehat{{e}}$ and 
$\widehat{\overline {e}}$ are loops at the same vertex of $\widehat G$.
We denote by 
$\mathcal M_{1, 1} (M\negthinspace {\scriptstyle D}({G}))$
the set of multipliers of fixed points of $M\negthinspace {\scriptstyle D}({G}) $, and
we denote by 
$\mathcal M_{2, 1} (M\negthinspace {\scriptstyle D}({G}))$($\mathcal M_{2, 2} (M\negthinspace {\scriptstyle D}({G}))$) the set of multipliers of the orbits of length two of 
$M\negthinspace {\scriptstyle D}({G})$ of length one (two).

Consider  the set of graphs $G$ with adjacency matrix (6.1) such that
$$
\mathcal M_{2,1}( M\negthinspace {\scriptstyle D}({G}) )  \neq \emptyset,
$$
which is equivalent to the condition, that
$$
 \Delta^{(\alpha)} > 0.
$$ 
In this case the graph $G$ is reconstructed from its Markov-Dyck shift by
\begin{align*}
 \Delta^{(\alpha)} &= \card (\mathcal M_{2,1} (M\negthinspace {\scriptstyle D}({G}))  ),
  \\
T_{\alpha\alpha} -  \Delta^{(\alpha)} &= 
\card (\{\widehat{e} \in \mathcal M_{1, 1} (M\negthinspace {\scriptstyle D}({G}) ): 
 \widehat{e} \sim \widehat{\widetilde{e} }   \}    ), \quad 
\widehat{\widetilde{e}} \in \mathcal M_{2, 1} (M\negthinspace {\scriptstyle D}({G}) ),
\\
T_{\alpha\beta} &= I^{(\widehat{e})}_4(M\negthinspace {\scriptstyle D}({G}) ) -  T_{\alpha \alpha}
- 1 ,  \quad \widehat{e} \in \mathcal M_{2, 1} (M\negthinspace {\scriptstyle D}({G}) ),
\\
 \Delta_\alpha T_{\beta \alpha} &= I_2^- ( M\negthinspace {\scriptstyle D}({G}) ) - T_{\alpha\alpha}(T_{\alpha\alpha} - 1)
 - T_{\beta\beta}(T_{\beta\beta} - 1).
\end{align*}

We partition the set of graphs with adjacency matrix (6.1) such that $\Delta^{(\alpha)}= 0$
into three subsets.

\subsection{}
Consider the set of graphs $G$ with adjacency matrix (6.1), such that
$$
\mathcal M_{2,1}(M\negthinspace {\scriptstyle D}({G})) = \emptyset,
$$
and such that there are
$\widehat{e},\widehat{ \widetilde{e}} \in \mathcal M_{1, 1} (M\negthinspace {\scriptstyle D}({G}))$ that are incompatible, which is equivalent to the condition that
$$
 \Delta^{(\alpha)} = 0, \qquad  T_{\alpha\alpha} > 0,  \  T_{\beta\beta} > 0.
$$
With a choice of $\widehat{e}, \widehat{\widetilde{e}}\in \mathcal M_{1, 1} (M\negthinspace {\scriptstyle D}({G})), \widehat{e} \not \sim \widehat{\widetilde{e}}$, 
set
$$
T_{\widehat{e}} = \card ( \{ \widehat{e}^\prime \in \mathcal M_{1, 1} (M\negthinspace {\scriptstyle D}({G})) : \widehat{e}^\prime \sim \widehat{e} \} ), 
\ \
T_{\widehat{\widetilde{e}}} = \card ( \{ \widehat{e}^\prime \in \mathcal M_{1, 1} (M\negthinspace {\scriptstyle D}({G})) : \widehat{e}^\prime \sim \widehat{\widetilde{e}} \} ),
$$
The reconstruction of the graph $G$ from its Markov-Dyck shift is by
\begin{align*}
I_4^{(\widehat{e})} (M\negthinspace {\scriptstyle D}({G}) )  + I_4^{(\widehat{\widetilde{e}})} (M\negthinspace {\scriptstyle D}({G})) 
-T_{\widehat{e}} -T_{\widehat{\widetilde{e}}}  - 1 
&= \Delta_\alpha + T_{\beta\alpha} = c( M\negthinspace {\scriptstyle D}({G}) ),
\\
I_2^-(M\negthinspace {\scriptstyle D}({G}) )  - T_{\widehat{e}}(T_{\widehat{e}} - 1) -T_{\widehat{\widetilde{e}}}(T_{\widehat{\widetilde{e}}} - 1) 
&= \Delta_\alpha T_{\beta\alpha} = d(M\negthinspace {\scriptstyle D}({G}) ).
\end{align*}

\subsection{}
Consider the set of graphs $G$ with adjacency matrix (6.1), such that 
$$
\mathcal M_{2,1}(M\negthinspace {\scriptstyle D}({G})  = \emptyset,  \quad I^-_1(M\negthinspace {\scriptstyle D}({G}) ) > 0,
$$
and such that all $\widehat{e} \in  \mathcal M_{1,1}( M\negthinspace {\scriptstyle D}({G} )  ) $ are compatible, which is equivalent to the condition, that
$$
 \Delta^{(\alpha)} = 0, \qquad  
 T_{\alpha\alpha} + T_{\beta\beta} > 0, \ T_{\alpha\alpha}  T_{\beta\beta}  = 0.
$$
Under these assumptions  one has that
\begin{multline*}
- 1 - I^-_1(M\negthinspace {\scriptstyle D}({G}) ) +
\\
\tfrac{1}{2}(  \min \{ I_4^{(\mu)}:\mu \in
 \mathcal M_2(M\negthinspace {\scriptstyle D}({G}) ) \} +
 \max \{ I_4^{(\mu)} :\mu \in \mathcal M_2(M\negthinspace {\scriptstyle D}({G}) ) \} =
 \\
  \Delta_\alpha + T_{\beta\alpha} = c(M\negthinspace {\scriptstyle D}({G}) ),
 \end{multline*}
 \begin{align*}
 I_2ã(M\negthinspace {\scriptstyle D}({G}) ) - I_1^-( M\negthinspace {\scriptstyle D}({G})  )(I_1^-( M\negthinspace {\scriptstyle D}({G})  )   - 1) = \Delta_\alpha  T_{\beta\alpha} = d(M\negthinspace {\scriptstyle D}({G})  ).
\end{align*}
It follows, that 
\begin{align*}
A_G =
\left(
\begin{matrix}
 T_{\alpha \alpha}   &         1     & \Delta_\alpha
\\
   0     &     0          & T_{\alpha \beta}- \Delta_\alpha
\\
      T_{\beta\alpha }    &          0     & 0
\end{matrix}
\right), \tag {6.7}
\end{align*}
or that
\begin{align*}
A_G =
\left(
\begin{matrix}
 0  &         1     & \Delta_\alpha
\\
   0     &     0          & T_{\alpha \beta}- \Delta_\alpha
\\
      T_{\beta\alpha }    &          0     & T_{\beta \beta} 
\end{matrix}
\right). \tag {6.8}
\end{align*}
One distinguishes three cases.

\noindent
$\bold{6.2.a.}$ Assume moreover, that
$$
1 + \Delta_\alpha \neq T_{\beta\alpha }.
$$
Under this additional assumption, if 
$$
I_4^{(\widehat{e})} =   I_2^-(M\negthinspace {\scriptstyle D}({G}) ) + 1 + \Delta_\alpha, \quad \widehat{e} \in \mathcal M_{1,1} 
 (M\negthinspace {\scriptstyle D}({G}) ),
$$
then $A_G$ is given by (6.7), and if
$$
I_4^{(\widehat{e})} = I_2^-(M\negthinspace {\scriptstyle D}({G})) + T_{\beta, \alpha}, \quad 
\widehat{e} \in \mathcal M_{1,1}  
(M\negthinspace {\scriptstyle D}({G}) ),
$$
then $A_G$ is given by (6.7).

\noindent
$\bold{6.2.b.}$ Assume moreover that
$$
1 + \Delta_\alpha = T_{\beta\alpha }, \qquad  T_{\alpha\beta } -  \Delta_\alpha \neq 
T_{\beta\alpha }   .
$$
Under this additional assumption, if 
$$
I_6^{(\widehat{e})} (M\negthinspace {\scriptstyle D}({G}) )= T_{\alpha\beta } -  \Delta_\alpha, \quad 
\widehat{e} \in \mathcal M_{1,1}  
(M\negthinspace {\scriptstyle D}({G}) ),
$$
then $A_G$ is given by (6.7), and if
$$
I_6^{(\widehat{e})}(M\negthinspace {\scriptstyle D}({G})) = T_{\beta\alpha },
\quad 
\widehat{e} \in \mathcal M_{1,1}  
(M\negthinspace {\scriptstyle D}({G}) ),
$$
 then $A_G$ is given by (6.8).

\noindent
$\bold{6.2.c.}$ Assume moreover, that
\begin{align*}
1 + \Delta_\alpha = T_{\beta\alpha } = T_{\alpha\beta } -  \Delta_\alpha. \tag {6.9}
\end{align*}
One distinguishes two cases.

\noindent
$\bold{6.2.c.I.}$ Assume further, that
$$
 \Delta_\alpha = 0.
$$
Under this further assumption one has that
\begin{align*}
A_G =
\left(
\begin{matrix}
 I_1^-( M\negthinspace {\scriptstyle D}({G}))  &         1     & 0
\\
   0     &     0          & 1
\\
      1   &          0     & 0
\end{matrix}
\right), 
\end{align*}
or that
\begin{align*}
A_G =
\left(
\begin{matrix}
 0  &         1     & 0
\\
   0     &     0          & 1
\\
      1    &          0     & I_1^-(M\negthinspace {\scriptstyle D}({G}) ) 
\end{matrix}
\right).
\end{align*}
These two adjacency matrices yield isomorphic graphs.

\noindent
$\bold{6.2.c.II.}$ Assume further, that
\begin{align*}
\Delta_\alpha > 0.
\end{align*}
Under this further assumption one has from (6.9) that
$$
T_{\alpha\beta } = 1 + \Delta_\alpha, \quad T_{\beta\alpha } = 1 + 2\Delta_\alpha.
$$
If $\widehat{G}
$ is isomorphic to the graph with adjacency matrix
\begin{align*}
\left(
\begin{matrix}
 I_1^-(M\negthinspace {\scriptstyle D}({G}) ) &         1 + \Delta_\alpha    
\\
    1 + 2\Delta_\alpha    &     0   
 \end{matrix}
\right),
\end{align*}
then 
\begin{align*}
A_G =
\left(
\begin{matrix}
 I_1^-( M\negthinspace {\scriptstyle D}({G}))  &         1     & \Delta_\alpha
\\
   0     &     0          & 1
\\
      1 + 2\Delta_\alpha  &          0     & 0
\end{matrix}
\right), 
\end{align*}
and if $\widehat{G}
$ is isomorphic to the graph with adjacency matrix
\begin{align*}
\left(
\begin{matrix}
 I_1^-(M\negthinspace {\scriptstyle D}({G})) &         1 +2 \Delta_\alpha    
\\
    1 + \Delta_\alpha    &     0   
 \end{matrix}
\right),
\end{align*}
then
\begin{align*}
A_G =
\left(
\begin{matrix}
 0  &         1     &  \Delta_\alpha
\\
   0     &     0          & 1
\\
      1 + 2\Delta_\alpha    &          0     & I_1^-(M\negthinspace {\scriptstyle D}({G}) ) 
\end{matrix}
\right).
\end{align*}

\subsection{}
Consider the set of graphs $G$ with adjacency matrix (6.1), such that  one has that
$$
I_1^-(M\negthinspace {\scriptstyle D}({G}) ) = 0,
$$
which is equivalent to the condition, that
$$
T_{\alpha\alpha } = T_{\beta\beta } = 0.
$$ 
One distinguishes two cases.

\noindent
$\bold{6.3.a.}$ Assume, that also
$$
I_2^-( M\negthinspace {\scriptstyle D}({G}) ) > 0,
$$
which is equivalent to the condition, that
$$
\Delta_\alpha > 0.
$$
Under this further assumption the reconstruction if the graph $G$ from 
$M\negthinspace {\scriptstyle D}({G})$ is by
$$
-1 + I_4^{(\mu)}( M\negthinspace {\scriptstyle D}({G}) )  = \Delta_\alpha + T_{\alpha\beta} = c(M\negthinspace {\scriptstyle D}({G}) ), \quad \mu \in
\mathcal M_{2,2}(M\negthinspace {\scriptstyle D}({G}) ),
$$
$$
I_2^-(M\negthinspace {\scriptstyle D}({G}) ) =  \Delta_\alpha T_{\beta\alpha} =  d(M\negthinspace {\scriptstyle D}({G})).
$$
\noindent
$\bold{6.3.b.}$ Assume, that also
$$
I_2^-(M\negthinspace {\scriptstyle D}({G}))= 0,
$$
which is equivalent to the condition, that
$$
\Delta_\alpha = 0.
$$

Denote by $I_2^-(\alpha)$ the cardinality of the set of points 
$(p_i)_{i \in \Bbb Z}$
of period two of $M\negthinspace {\scriptstyle D}({G})$, such that
$
s(p_0) \in \{ \alpha_0,  \alpha_1  \},   
$
and by 
$I_2^-(\beta)$ the cardinality of the set of points 
$(p_i)_{i \in \Bbb Z}$
of period two of $M\negthinspace {\scriptstyle D}({G})$, such that
$
s(p_0) = \beta.
$
By \cite[Corollary 2.3] {KM} the set
$
\{ I_2^0(\alpha), I_2^0(\beta) \}
$
is an invariant of topological conjugacy. One has, that
$$
I_2^0(\alpha) = 1 +  T_{\alpha\beta}  , \quad I_2^0(\beta) = T_{\beta\alpha}  .
$$
The graph $G$ is reconstructed  by
$$
\{ T_{\beta\alpha}  \} = \{ T_{\alpha\beta}, T_{\beta\alpha} \}\cap
\{ I_2^0(\alpha) , I_2^0(\beta) \}.
$$

\medskip
\noindent
In summary we state a theorem.

\begin{theorem}
For graphs $G = G(\mathcal V, \mathcal E)$ such that the semigroup
$\mathcal S(M\negthinspace {\scriptstyle D}({G}))$ 
is the graph inverse semigroup of a two-vertex graph, and such that the graph 
$G(\mathcal V, \mathcal  F_G)$ decomposes into a one-edge tree and a one-vertex tree, the topological conjugacy of their Markov-Dyck shift implies the isomorphism of the graphs.
\end{theorem}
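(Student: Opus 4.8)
The plan is to assemble the reconstruction formulas established in Subsections 6.1--6.3 into a single uniform argument. The starting point is that, since $\mathcal S(M\negthinspace {\scriptstyle D}({G})) = \mathcal S(\widehat G)$ is an invariant of topological conjugacy, the isomorphism class of $\widehat G$---and hence the quantities $a$, $b$ and $s$---are determined by the topological conjugacy class of $M\negthinspace {\scriptstyle D}({G})$. By (6.6) the entry $T_{\beta\alpha}$ is then determined as soon as $c$ and $d$ are known, so the whole task reduces to recovering $c$ and $d$, equivalently the remaining entries $\Delta^{(\alpha)}$, $\Delta_\alpha$, $T_{\alpha\alpha}$, $T_{\beta\beta}$ and $T_{\alpha\beta}$, from invariants (note that $T_{\beta\beta} = s - T_{\alpha\alpha}$ once $T_{\alpha\alpha}$ is in hand).

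First I would record that the hypotheses on $G$ are themselves expressible through invariants: the requirement that $\mathcal S(M\negthinspace {\scriptstyle D}({G}))$ be the graph inverse semigroup of a two-vertex graph whose $G(\mathcal V,\mathcal F_G)$ splits into a one-edge and a one-vertex tree is captured by $I^{0}_2(M\negthinspace {\scriptstyle D}({G})) = 1 + s + a$, as noted before the theorem. Consequently any graph whose Markov-Dyck shift is topologically conjugate to $M\negthinspace {\scriptstyle D}({G})$ again lies in this class and carries an adjacency matrix of the form (6.1). This legitimizes comparing two such graphs purely at the level of the parameters of (6.1).

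Next I would run the case partition exactly as set up in the section, checking at each branch that the defining condition is invariant. The top split is $\Delta^{(\alpha)} > 0$ versus $\Delta^{(\alpha)} = 0$, detected by whether $\mathcal M_{2,1}(M\negthinspace {\scriptstyle D}({G})) \neq \emptyset$. In the first case the displayed formulas recover $\Delta^{(\alpha)}$, $T_{\alpha\alpha}$, $T_{\alpha\beta}$ and the product $\Delta_\alpha T_{\beta\alpha}$ directly, whence $T_{\beta\alpha}=b/T_{\alpha\beta}$ and $\Delta_\alpha$ follow. In the second case I would subdivide according to the compatibility relation $\sim$ on $\mathcal M_{1,1}(M\negthinspace {\scriptstyle D}({G}))$, which is a topological-conjugacy notion since it is read off from $\mathcal S^-(M\negthinspace {\scriptstyle D}({G}))$, and according to the invariants $I_1^-$ and $I_2^-$: this produces Subsections 6.1, 6.2, 6.3, and within 6.2 and 6.3 the finer cases 6.2.a--c and 6.3.a--b. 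In each cell I would invoke the corresponding reconstruction identities, which express $c$ and $d$ (or directly the matrix entries) through $I_4^{(\widehat e)}$, $I_6^{(\widehat e)}$, $I_2^-$, $I_1^-$ and cardinalities of multiplier sets.

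The main obstacle, where genuine checking is needed rather than bookkeeping, is the handling of the residual ambiguities in the most degenerate cells. In Subsection 6.2 the invariants pin down only an unordered pair of candidate values, and one must use the secondary invariant $I_4^{(\widehat e)}$ in 6.2.a, then $I_6^{(\widehat e)}$ in 6.2.b, to break the tie between the two candidate matrices (6.7) and (6.8); when even these coincide, that is when (6.9) holds in 6.2.c, I would have to verify, as the section asserts, that the two surviving adjacency matrices in fact describe isomorphic graphs, so that the non-uniqueness of the reconstruction is harmless. The analogous point arises in 6.3.b, where only the unordered set $\{I_2^0(\alpha), I_2^0(\beta)\}$ is available and $T_{\beta\alpha}$ must be singled out via $\{T_{\alpha\beta}, T_{\beta\alpha}\}\cap\{I_2^0(\alpha),I_2^0(\beta)\}$, using that this unordered set is itself an invariant. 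Once these isomorphism-level coincidences are confirmed, every cell yields the adjacency matrix (6.1) up to graph isomorphism, and the theorem follows by collecting the cases.
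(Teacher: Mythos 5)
Your proposal follows essentially the same route as the paper: the theorem is stated "in summary," and its proof is precisely the case analysis of Section 6 that you reassemble — invariance of $a$, $b$, $s$ via $\widehat G$, reduction to recovering $c$ and $d$, the partition on $\Delta^{(\alpha)}$, the compatibility relation, the tie-breaking via $I_4^{(\widehat e)}$ and $I_6^{(\widehat e)}$, and the observation that the two surviving matrices in case 6.2.c.I give isomorphic graphs. No substantive difference from the paper's argument.
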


\bigskip

\par\noindent Toshihiro Hamachi
\par\noindent Faculty of Mathematics
\par\noindent Kyushu University
\par\noindent 744 Motooka, Nishi-ku
\par\noindent Fukuoka 819-0395, 
\par\noindent Japan
\par\noindent t.hamachi.796@m.kyushu-u.ac.jp

\bigskip

\par\noindent Wolfgang Krieger
\par\noindent Institute for Applied Mathematics, 
\par\noindent  University of Heidelberg,
\par\noindent Im Neuenheimer Feld 205, 
 \par\noindent 69120 Heidelberg,
 \par\noindent Germany
\par\noindent krieger@math.uni-heidelberg.de

\bigskip

 \end{document}